\pdfoutput=1
\RequirePackage{ifpdf}
\ifpdf
\documentclass[pdftex]{sigma}
\else
\documentclass{sigma}
\fi

\numberwithin{equation}{section}

\usepackage{upgreek}
\usepackage{mathtools}
\usepackage{tikz}
\usepackage[all]{xy}
\usetikzlibrary{matrix,calc,arrows.meta,positioning,patterns}

\newcommand{\bC}{{\mathbb C}}

\newcommand{\bR}{{\mathbb R}}
\newcommand{\bZ}{{\mathbb Z}}
\newcommand{\bN}{{\mathbb N}}

\newcommand{\bT}{{\mathbb T}}

\newcommand{\ev}{\operatorname{ev}}

\DeclareMathOperator{\inj}{inj}

\DeclareMathOperator{\Sec}{sec}

\DeclareMathOperator{\diam}{diam}
\DeclareMathOperator{\Tot}{Tot}
\newtheorem{Theorem}{Theorem}[section]
\newtheorem*{Theorem*}{Theorem}

\newtheorem{tm}[Theorem]{Theorem}
\newtheorem{lm}[Theorem]{Lemma}

\newtheorem{Proposition}[Theorem]{Proposition}

\theoremstyle{definition}

\newtheorem{df}[Theorem]{Definition}
\newtheorem{ques}[Theorem]{Question}

\newtheorem{rem}[Theorem]{Remark}

\begin{document}

\allowdisplaybreaks

\newcommand{\arXivNumber}{2602.08969}

\renewcommand{\thefootnote}{}

\renewcommand{\PaperNumber}{066}

\FirstPageHeading

\ShortArticleName{Symplectic Excision and Distance Rigidity}

\ArticleName{Symplectic Excision and Distance Rigidity\footnote{This paper is a~contribution to the Special Issue on Geometry and Dynamics in memory of Will Merry. The~full collection is available at \href{https://sigma-journal.com/Merry.html}{https://sigma-journal.com/Merry.html}}}

\Author{Yoel GROMAN}
\AuthorNameForHeading{Y.~Groman}
\Address{Department of Mathematics, The Hebrew University of Jerusalem, Jerusalem, Israel}
\Email{\mail{yoel.groman@mail.huji.ac.il}}

\ArticleDates{Received February 09, 2026, in final form June 30, 2026; Published online July 14, 2026}

\Abstract{We consider various notions of completeness in symplectic topology and ask two related questions. Does a~complete open symplectic manifold remain complete after excising a~subset? Can two sets be made arbitrarily far apart by adjusting the almost complex structure within an appropriate class of complete almost complex structures? We find rigidity phenomena when the excised set is a~symplectic hypersurface. These arise from certain open Gromov--Witten invariants. We contrast this with flexibility that often occurs when the excised set is coisotropic. We also briefly touch on the opposite question of obstructions to existence of a~complete symplectic structure compatible with a~given complex structure. For the notion of completeness we first consider the traditional notion of geometric boundedness. We then introduce a~broader notion of normalized completeness, related to the notion of intermittent boundedness of [\textit{Geom. Topol.} \textbf{27} (2023), 1273--1390, arXiv:1510.04265], which depends on $C^0$ properties and is a~contractible condition. Finally, we speculate about the relation to a~Fukaya-categorical notion of completeness.}

\Keywords{symplectic topology; completeness; distance rigidity; Gromov--Witten invariants}

\Classification{53D35; 53D40; 53D12}

\begin{flushright}
\emph{Dedicated to the memory of Will J.~Merry}
\end{flushright}

\renewcommand{\thefootnote}{\arabic{footnote}}
\setcounter{footnote}{0}

\section{Introduction}
Let $(M,\omega)$ be an open symplectic manifold and consider the space $\mathcal{J}$ of smooth $\omega$-tame almost complex structures on $M$. Since $\omega$-tameness is a~pointwise condition, the associated metrics~$g_J$ exhibit considerable flexibility. It is natural to ask whether any geometric features are common to all elements of~$\mathcal{J}$. For this question to be productive, one needs to impose additional completeness conditions on the almost complex structures. The geometric features we then consider are distances between subsets of $M$. We find rigidity phenomena when one considers the distance to symplectic hypersurfaces which contrast with flexibility that often occurs when the excised set is coisotropic.

\subsection{The results}
As our first proxy for completeness, we use the well-established notion of \emph{geometric boundedness} (see, e.g., \cite[Section~4.2]{Sikorav}): an~$\omega$-compatible almost complex structure $J$ on $M$ is called \emph{geometrically bounded} if the associated Riemannian metric $g_J(\cdot,\cdot)=\omega(\cdot,J\cdot)$ is complete and has uniformly bounded sectional curvature and uniformly positive injectivity radius.\footnote{To treat $\omega$-tame almost complex structures, one must also impose a~uniform bound on the norm of $\omega$ with respect to $g_J$ (i.e., $|\omega|_{g_J}\le C$).}

We begin with an existence question.
\begin{ques}\label{ques:1}
Given $\omega$, does there exist a~geometrically bounded $\omega$-compatible almost complex structure $J$ on $M$?
\end{ques}
It is customary to call $(M,\omega)$ itself geometrically bounded if the answer to Question~\ref{ques:1} is affirmative. We consider this question when we start with a~geometrically bounded $M$ and excise a~subset $N\subset M$. Question~\ref{ques:1} can be understood as the question: \emph{can the distance to $N$ of any compact subset of $M\setminus N$ be made arbitrarily large by adjusting the almost complex structure?}

\begin{tm}\label{tm:1}
Let $M$ be a~geometrically bounded symplectic manifold with finitely generated spherical homology, and suppose $\omega$ is rational.
Let $N\subset M$ be a~properly embedded symplectic hypersurface.
Then $M\setminus N$ is not geometrically bounded.
\end{tm}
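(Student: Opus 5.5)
Argue by contradiction: assume $J'$ is a geometrically bounded $\omega$-tame almost complex structure on $M\setminus N$, and produce a $J'$-holomorphic curve that cannot exist. The obstruction will be an open Gromov--Witten type count: a nonzero count of holomorphic disks (or spheres) in $M$ that hit $N$ with positive intersection number. Monotonicity for geometrically bounded structures will then trap these curves in a compact region of $M\setminus N$, where they cannot hit $N$.

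The plan has four steps.

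\textbf{Step 1 (local model near $N$).} Choose a point $p\in N$ and a Darboux-type neighbourhood $U\cong B^{2n-2}\times D^2$ in which $N=\{z=0\}$. Take the small disk $D_p=\{0\}\times D^2_\epsilon$. Its boundary circle $\gamma$ lies in $M\setminus N$, links $N$ once, and $\gamma$ sits on a small Lagrangian torus or on the boundary of a standard disk.

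\textbf{Step 2 (rigid count in $M$).} Use the geometric boundedness of $M$ to interpolate. Take a geometrically bounded $J$ on $M$ that is standard in $U$. The count of $J$-holomorphic disks with boundary on a small Clifford-type torus $L\subset U$ around $N$ is then well defined. In the relative class of $D_p$, whose symplectic area is small, this count is $\pm1$.
\begin{itemize}
\item Compactness holds because $M$ is geometrically bounded.
\item Disk and sphere bubbling is controlled by the small energy of the class, together with rationality of $\omega$ and finite generation of $\pi_2$-homology. These give a positive lower bound on the energy of nonconstant spheres, and only finitely many classes of bounded energy modulo torsion.
\item The count is therefore invariant under deformations of $J$ that are geometrically bounded on $M$. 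The intersection number of the disk class with $N$ is $1$.
\end{itemize}

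\textbf{Step 3 (deformation towards $N$).} Now use the hypothetical $J'$. Fix a compact set $K\subset M\setminus N$ containing $L$. Since $J'$ is complete on $M\setminus N$, $N$ lies at infinite $J'$-distance from $K$. Build a family $J_R$ on $M$ that agrees with $J'$ on the $R$-neighbourhood of $K$, is glued to $J$ near $N$, and is uniformly geometrically bounded on the region where it agrees with $J'$.

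For each $R$ the count from Step 2 is still nonzero, provided $J_R$ remains geometrically bounded on all of $M$. This is achievable because we only need tameness plus bounds on each fixed $J_R$, not uniform in $R$. So there is a $J_R$-disk $u_R$ with boundary on $L$ and energy equal to the fixed small area $a$, and $u_R$ meets $N$.

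\textbf{Step 4 (monotonicity contradiction).} The disk $u_R$ must travel from $L\subset K$ to $N$, so it crosses the $J'$-annulus between distance $1$ and distance $R$ from $K$. Here the geometry is uniformly bounded. The monotonicity lemma then gives energy at least $c\cdot R$, with $c$ depending only on the bounds for $J'$. This contradicts energy $=a$ once $R$ is large.

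The main obstacle is Step 2 together with the invariance in Step 3. One needs a well-defined, deformation-invariant open Gromov--Witten count for disks on $L$ in a noncompact $M$, robust under an arbitrary geometrically bounded change of $J$ far away.

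\emph{Controlling bubbling.} The hypotheses on rationality and finitely generated spherical homology enter exactly here. They bound the number of sphere classes below the relevant energy and give a gap. I would try to pick $\epsilon$ smaller than the minimal sphere area. Then bubbling is impossible by energy, and no virtual techniques are needed.

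\emph{Disk bubbling.} Disk bubbling on the torus is handled by the minimal Maslov/area of disks on $L$. The disk of class $D_p$ is the minimal-area disk. Any splitting of it would produce an area below the minimal disk area or a nonconstant sphere of tiny area, both of which are excluded.
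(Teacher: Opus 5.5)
Your architecture is the paper's. You use a meridian disc on a small torus linking $N$ and a degree-one count that is invariant along geometrically bounded deformations of $J$ on $M$. You build an interpolated structure equal to the hypothetical $J'$ on a compact $g_{J'}$-ball around $L$ and to a fixed geometrically bounded structure far away, and finish with a monotonicity contradiction. Letting $R\to\infty$ with linear energy growth is equivalent to the paper's single choice $R>\max(2CE/r_0,\sqrt{CE})$ in the local confinement lemma.

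\textbf{The gap: disc bubbling.} Excluding it is exactly where the rationality hypothesis has to do its work, and your assertion that the meridian class $\alpha=[D_p]$ is ``the minimal-area disk'' does not follow from taking $\epsilon$ small. Rationality and finite generation give only $\omega(\pi_2(M))=\kappa\bZ$. The relative periods $\omega(\pi_2(M,L))$ form the group generated by $\kappa$ and the areas of the basic coordinate discs on $L$. This group is dense in $\bR$ as soon as $\pi\epsilon^2/\kappa\notin\mathbb{Q}$, which an open smallness condition on $\epsilon$ will generically produce.

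In that case there are classes $\beta$ with $0<\omega(\beta)<\omega(\alpha)$, for instance boundary $k\,\partial D_p$ plus a sphere class of area $-\ell\kappa$. Along the path from the standard structure to $J_R$, which is arbitrary near $L$, nothing rules out $J_s$-holomorphic representatives of $\beta$ and $\alpha-\beta$. The Maslov index does not help either, since $M$ need not be monotone. So $\alpha$ can split into pieces of Maslov index $0$ and $2$. That is a codimension-one degeneration in a one-parameter family, and it is precisely what can change $\deg(\ev)$. Separately, ``finitely many classes of bounded energy modulo torsion'' is false for classes in $\ker\omega$, though you do not need it.

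\textbf{The fix: choose the radii arithmetically.} As in the paper, take $\pi t^2=\kappa/m$ for a large integer $m$ and the remaining radii equal to $10t$, so the basic disc areas are $\kappa/m$ and $100\kappa/m$. Then every relative class has area in $(\kappa/m)\bZ$. A stable map in $\alpha$ has total area $\kappa/m$, so it cannot contain two nonconstant disc components, and $\kappa/m<\kappa$ excludes sphere components. With this choice the moduli spaces are compact for every geometrically bounded structure along the path, and the rest of your argument goes through.
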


\begin{tm}\label{tm:2}
Let $U\subset \bR^n$ be convex.
Let $\omega$ be the standard symplectic form on $\bR^n\times\bT^n$. If there exists an $\omega$-compatible almost complex structure on $U\times\bT^n$ which is geometrically bounded, then $U=\bR^n$.
\end{tm}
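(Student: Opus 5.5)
The plan is to argue by contradiction, using a count of holomorphic annuli between two torus fibres together with the monotonicity lemma. The count is a deformation invariant that equals $1$ for the standard structure. A geometrically bounded $J$ would force such annuli to have bounded area and yet arbitrarily large diameter, which is impossible. The invariance argument in the second paragraph is the part I am least sure of.

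Suppose $U\neq\bR^n$ and that $J$ is geometrically bounded on $U\times\bT^n$. Since $U$ is convex, it lies in an open half-space $\{\langle\nu,x\rangle<c\}$. Choose an integral class $b\in\bZ^n\cong H_1(\bT^n)$ with $\langle \nu,b\rangle>0$, and fix $q\in U$. The ray $q+sb$, $s\ge0$, leaves $U$ at some finite time $s_0$; set $p=q+s_0b\in\partial U$. For $y=q+sb$ with $s<s_0$, let $L_y=\{y\}\times\bT^n$; these are Lagrangian tori. Let $A$ be the relative class of an annulus with one boundary circle on $L_q$ in class $b$ and the other on $L_y$ in class $-b$. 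The symplectic area of any annulus in class $A$ is the flux $\langle b,y-q\rangle=s|b|^2\le s_0|b|^2=:E$, so it is bounded independently of $y$. Since $U\times\bT^n\simeq\bT^n$, we have $\pi_2(M)=0$ and $\pi_2(M,L_y)=0$, so there is no sphere or disc bubbling.

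First, I set up the annulus count. For the standard complex structure $J_0$ (identify $\bR^n\times\bT^n$ with $(\bC^*)^n$), the annuli $\{(q+\sigma b,\theta_0+tb)\colon \sigma\in[0,s],\,t\in S^1\}$ are holomorphic. Using harmonicity of the coordinates $\langle a,x\rangle$ and the maximum principle, I expect these to be the only $J_0$-holomorphic annuli in class $A$. Cutting down by a point constraint on $L_q$ (or taking the degree of the evaluation map to $L_q$) should give a regular count equal to $1$. I then interpolate $J_\tau$, $\tau\in[0,1]$, between $J_0$ and $J$ on a compact set $K$ containing the segment $[q,y]\times\bT^n$, with $J_\tau=J$ outside $K$ for all $\tau$. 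The key a priori estimate is the monotonicity lemma in bounded geometry. There are constants $c,r>0$, depending only on the geometric bounds of $J$, such that a $J$-holomorphic curve passing through a point at distance $\ge r$ from its boundary and from $K$ has area $\ge c$ inside that $r$-ball. Chaining this along the curve shows that a connected $J_\tau$-curve of area $\le E$ stays within distance $\approx E r/c$ of $K$. By completeness of $g_J$ this neighbourhood is compact. So the moduli spaces for the family are compact, and the count is invariant. In particular, for every $y$ on the segment there is a $J$-holomorphic annulus $u_y$ in class $A$ connecting $L_q$ to $L_y$ with area $\le E$.

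Finally I get the contradiction. As $y\to p\in\partial U$, the points of $L_y$ leave every compact subset of $U\times\bT^n$. Completeness of $g_J$ then gives $d_{g_J}(L_q,L_y)\to\infty$. The annulus $u_y$ is connected and meets both $L_q$ and $L_y$, so it contains a path of length $\ge d(L_q,L_y)$. Along this path we can place about $d(L_q,L_y)/(2r)$ disjoint $r$-balls centred on the image, away from the boundary tori. Monotonicity gives each ball area $\ge c$, so the area of $u_y$ tends to infinity, contradicting area $\le E$. Hence $U=\bR^n$.

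The main obstacle is the second paragraph: justifying that the annulus count is a well-defined deformation invariant equal to $1$. One needs transversality for annuli with varying conformal modulus (including degenerations of the modulus, where the annulus breaks into discs, which are excluded by $\pi_2(M,L)=0$ and exactness), and an honest computation of the $J_0$ count. If this proves delicate, I would first try to replace annuli by Floer strips for the pair $(L_q,L_y)$ with a local system, or work with a Hamiltonian perturbation whose continuation map is nonzero. In either case the same monotonicity and confinement argument would supply the contradiction.
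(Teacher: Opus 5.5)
There is a genuine gap, and it is at the step you flagged: for $n\ge 2$ the annulus count is not $1$ but $0$. Start with the dimension count. An annulus in class $A$ has Euler characteristic $0$ and Maslov index $0$, so for a fixed conformal structure the index is $0$. Adding the modulus and dividing by rotations, the unmarked moduli space has virtual dimension $0$, and with one boundary marked point it has virtual dimension $1$. So the evaluation map to the $n$-dimensional torus $L_q$ has no degree, and a point constraint on $L_q$ leaves virtual dimension $1-n<0$.

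Your $J_0$ computation is right, but it exhibits the problem. For $y-q=sb$ the $J_0$-annuli form a family parametrized by $\theta_0\in\bT^n$, an $(n-1)$-torus after dividing by rotations. So $J_0$ is not regular, and the family does not survive perturbation. Let $f$ be a diffeomorphism, $C^\infty$-close to the identity and equal to it near $L_q$, with $f(L_y)=L_{y+w}$ for a small $w$ not parallel to $b$ (Fukaya's trick). Then $f^*J_0$ is $\omega$-tame and close to $J_0$. Its annuli in class $A$ correspond to $J_0$-annuli from $L_q$ to $L_{y+w}$. Your own harmonic-function argument shows there are none, since $y+w-q$ is not a multiple of $b$. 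Your cobordism argument works verbatim through tame structures, so any deformation-invariant count it defines vanishes, and nothing produces the annulus $u_y$ for a given $J$.

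The paper makes exactly this remark in the introduction: the annulus invariant equals $1$ for $n=1$ and vanishes identically for $n\ge 2$. An a priori flux bound on $d_{g_J}(L_q,L_y)$ between two fibres, which your argument would yield, is posed there as an open question. Your Floer-theoretic fallback fails as well: $L_q\cap L_y=\varnothing$, so $HF(L_q,L_y)=0$ with any local system, and Hamiltonian perturbations do not change this.

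The missing idea is to bound the distance from $L_p$ to $\partial U\times\bT^n$, not to another fibre, using Maslov index $2$ discs created by a symplectic cut near $\partial U$. With one boundary marked point these discs have virtual dimension $n=\dim L_p$, so they can carry a nonzero invariant. Concretely, the paper takes:
$B=B_R(L_p)\subset U\times\bT^n$, which is compact by completeness;
a primitive $v$ nearly realizing $\delta_{\mathrm{flux}}(p;\partial U)$;
a Delzant polytope $P$ with $\pi(B)\subset\operatorname{Int}P\subset U$ having a facet with outward normal $v$ (Lemma~\ref{lemma2.11}).
It views $B$ inside the toric manifold $X_P$ and extends $J|_B$ arbitrarily to a tame $\overline{J}$.

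The $J$-independence of the Shelukhin--Tonkonog--Vianna $\Psi$-invariant, combined with the Cho--Oh and Fukaya--Oh--Ohta--Ono computation for the toric structure, gives a $\overline{J}$-disc component on $L_p$ of area at most $\delta_{\mathrm{flux}}(p;\partial U)+\varepsilon$ (Proposition~\ref{prop:toric-disc-Psi}). Because $L_p$ is exact in $\mu_P^{-1}(\operatorname{Int}P)$, that disc must reach the toric divisor and hence leave $B$. Local confinement (Lemma~\ref{lem:local-confinement}) then bounds $R$ in terms of $a$ and $\delta_{\mathrm{flux}}(p;\partial U)$. When $J$ is geometrically bounded on $U\times\bT^n$ itself, every $R$ is admissible, which contradicts this bound unless $\partial U=\varnothing$, that is, unless $U=\bR^n$.
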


Theorem~\ref{tm:2} follows from the following more quantitative statement which is of independent interest.
Identify primitive classes in $H_1(\bT^n;\bZ)$ with primitive vectors $v\in\bZ^n$, hence with integral conormals.
For an open convex set $U\subset\bR^n$ and $p\in U$, let $L_p := \{p\}\times \bT^n \subset U\times\bT^n$. Define the ``flux distance'' to the boundary by
\[
\delta_{\mathrm{flux}}(p;\partial U):=\inf_{v\in\bZ^n \mathrm{primitive}}\sup_{x\in \partial U}\langle v,p-x\rangle\in(0,+\infty).
\]
\begin{Proposition}\label{prop:flux-distance}
Fix $a>0$. There exists a~universal dimensionless constant $C>0$ such that for every open convex $U\subset\bR^n$, every $p\in U$, and every $\omega$-compatible almost complex structure $J$ on $\bR^n\times\bT^n$ for the standard symplectic form with $g_J$ $a$-bounded $($see Definition~{\rm\ref{df:a-bounded}} below$)$, one has
\[
 d_{g_J}(L_p, (\bR^n\setminus U)\times\bT^n)\leq Ca\delta_{\mathrm{flux}}(p;\partial U)
\]
provided $\delta_{\mathrm{flux}}(p;\partial U)\gg 1/a$.
\end{Proposition}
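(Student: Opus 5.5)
The plan is to produce, for every $J$ as in the statement, a connected $J$-holomorphic curve of symplectic area about $\delta_{\mathrm{flux}}(p;\partial U)$ that meets both $L_p$ and $(\bR^n\setminus U)\times\bT^n$. A monotonicity argument for $a$-bounded metrics then turns this area bound into the distance bound.

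\emph{Step 1 (choice of target torus).} Fix $\varepsilon>0$ and a primitive $v\in\bZ^n$ with $\sup_{x\in\partial U}\langle v,p-x\rangle\le \delta+\varepsilon$, where $\delta=\delta_{\mathrm{flux}}(p;\partial U)$. Since $U$ is convex and open, $U$ is contained in the half-space $\{x:\langle v,p-x\rangle<\delta+\varepsilon\}$. Set $q=p-(\delta+\varepsilon)v/|v|^2$. Then $q\notin U$, and $\langle v,p-q\rangle=\delta+\varepsilon$. Consider the tori $L_{q_s}$ with $q_s=p-s\,v/|v|^2$ for $s\in(0,\delta+\varepsilon]$. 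For the standard complex structure $J_0$ on $T^*\bT^n=\bR^n\times\bT^n\cong(\bC^*)^n$, the cylinder swept out in the $v$ direction gives an explicit holomorphic annulus $A_s$ with one boundary on $L_p$ and the other on $L_{q_s}$. Each boundary circle lies in the class $\pm v$, and the area of $A_s$ is $\langle v,p-q_s\rangle=s$.

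\emph{Step 2 (invariance of an annulus count).} Define a moduli space $\mathcal M(J,s)$ of $J$-holomorphic annuli with boundaries on $L_p$ and $L_{q_s}$ in the class above, with a boundary marked point on $L_p$. Take its degree under evaluation, $\ev\colon\mathcal M(J,s)\to L_p$ (the expected dimension should match $\dim L_p=n$ once the conformal modulus is included). For $J_0$ the annuli are the $\bT^n$-translates of $A_s$, they are regular, and the degree is $\pm1$. To show this degree is independent of $J$ within the $a$-bounded class, I would run a cobordism argument along a path $J_t$ of $a$-bounded structures. The necessary ingredients are as follows.
\begin{itemize}
\item \emph{$C^0$ bounds.} These come from the area bound together with monotonicity (Step 3), applied uniformly along the path.
\item \emph{No bubbling.} $\pi_2(T^*\bT^n)=0$ and $\pi_2(T^*\bT^n,L_{q})=0$, so no sphere or disc bubbles occur.
\item \emph{Annulus degenerations.} The modulus can go to $0$, pinching the annulus to a disc pair, which is excluded by the previous item since the boundary classes are nontrivial. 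The modulus can go to $\infty$, breaking into discs with a node, which is again excluded. The only remaining danger is a degeneration into a Floer strip configuration between $L_p$ and $L_{q_s}$, and these tori are disjoint for $s>0$.
\end{itemize}
Hence the degree is nonzero for every admissible $J$. By Gromov compactness, a $J$-holomorphic annulus therefore exists even if $J$ is not regular. Letting $\varepsilon\to0$ is harmless.

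\emph{Step 3 (area to distance).} Take the annulus $u$ for $s=\delta+\varepsilon$. It is connected, has area $\delta+\varepsilon$, and meets both $L_p$ and $L_q\subset(\bR^n\setminus U)\times\bT^n$. Since $g_J$ is $a$-bounded, the monotonicity lemma gives the following: a $J$-curve passing through the center of a ball of radius $1/a$ with boundary outside the ball has area at least $c/a^2$ inside it, for a universal constant $c$. Pick a path in the image of $u$ from $L_p$ to $L_q$, and choose points along it spaced $2/a$ apart, excluding a $1/a$-neighbourhood of the boundary tori. The balls of radius $1/a$ about these points are disjoint, so there are at most $a^2(\delta+\varepsilon)/c$ of them. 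Hence
\[
d_{g_J}(L_p,L_q)\le \frac{2}{a}\Big(\frac{a^2(\delta+\varepsilon)}{c}+2\Big)\le C a\,\delta,
\]
where the final inequality uses $\delta\gg1/a$ to absorb the additive term.

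I expect Step 2 to be the main obstacle. The hard parts are setting up the annulus moduli space with the correct index and transversality (including the conformal modulus) and ruling out degenerations rigorously. A fallback is to replace annuli by Floer-theoretic objects: use Lagrangian isotopy $L_p\rightsquigarrow L_{q_s}$ with flux $s v$, and a continuation or ``shadow'' argument, to produce strips of area at most $s$ connecting the two tori.
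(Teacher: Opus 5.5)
\textbf{The gap is in Step 2.} For $n\ge 2$ the annulus count cannot force an annulus to exist for a general $J$. This is exactly the obstruction the paper points out in its introduction.

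First, the dimension count is off. For an annulus with boundary on $L_p\sqcup L_{q_s}$, the Fredholm index is $n\chi+\mu=0$, since the boundary class $(v,-v)$ on the torus fibers has Maslov index $0$. The modulus adds $1$, the $S^1$ of automorphisms subtracts $1$, and the boundary marked point adds $1$. So your marked moduli space has virtual dimension $1$, not $n$, and $\ev\colon\mathcal M(J,s)\to L_p$ has no degree unless $n=1$. Correspondingly, your $J_0$-family (the $\bT^n$-translates of $A_s$ with a marked point) is $n$-dimensional. It is therefore not transversely cut out: it has $(n-1)$-dimensional excess, and ``regular with degree $\pm1$'' is false.

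Second, the deformation-invariant quantity, the count of unmarked annuli (virtual dimension $0$), vanishes for $n\ge2$. For $J_0$, each $\log|u_j|$ is harmonic on the annulus with constant boundary values, so $\log|u|=p+(q-p)h$ with $h$ the harmonic measure. Single-valuedness of $u$ then forces $q-p$ to be a real multiple of the boundary class $v$. Move $q_s$ slightly off the line $p+\bR v$. By the Fukaya trick this amounts to perturbing $J$ with the tori fixed, and your compactness argument still applies because the tori stay disjoint and the area is fixed by the class. After this move there are no annuli at all, so the count is $0$. The paper phrases this as moving $q$ around $p$ to where the flux is nonpositive.

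So nothing in your argument produces an annulus for a general $a$-bounded $J$; it only works for $n=1$, where the count is $1$. The Floer-strip fallback does not help either: $L_p$ and $L_{q_s}$ are disjoint, so there are no strips between them.

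\textbf{The missing idea is a boundary reduction (symplectic cut):} replace the far torus $L_q$ by a divisor, turning the vanishing annulus problem into a Maslov-$2$ disc problem with a nonzero count. The paper proceeds as follows.
\begin{itemize}
\item Fix $R$ with $B:=B_R(L_p)\subset U\times\bT^n$; $B$ is compact by completeness.
\item Choose a Delzant polytope $P$ with $\pi(B)\subset\operatorname{Int}P\subset U$ and a facet $F$ with normal $v$ close to the supporting hyperplane, so that $\omega_P(\beta_F)\le\delta+\varepsilon$.
\item Embed $B$ in the closed toric manifold $X_P$ and extend $J|_B$ arbitrarily.
\item Apply the Shelukhin--Tonkonog--Vianna $\Psi$-invariant. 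It is independent of $J$, needs no genericity, and tolerates sphere bubbles. Combined with the Cho--Oh/FOOO computation for $J_{\mathrm{toric}}$, it yields a disc component with boundary on $L_p$ and area at most $\delta+\varepsilon$.
\item Since $L_p$ is exact in $\mu_P^{-1}(\operatorname{Int}P)$, this disc must hit $\mu_P^{-1}(\partial P)$, which lies outside $B$.
\item Local confinement then gives $R\le\max\bigl(2CE/r_0,\sqrt{CE}\bigr)$.
\end{itemize}
Your Step 3 (area to distance via monotonicity) is essentially the right final step. The paper's route also needs $a$-boundedness only on $U\times\bT^n$, not near $L_q$.
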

This is proved in Section~\ref{sec:proofs} and immediately implies Theorem~\ref{tm:2}.
\begin{rem}\label{rem:Normalized} A~more precise formulation is given at the beginning of the proof (equation~\eqref{eqPreciseVersion}). Note that if we use the \emph{normalized distance} $\frac1{a}d_{g_J}$ we obtain an estimate which is \emph{independent of the bound on the geometry}. We revisit this point in Section~\ref{subsec:IntroCompleteness}. It is worth pointing out that since $a$-boundedness implies an injectivity radius lower bound of $1/a$, it naturally constrains the lengths of the circle fibers in $U\times\bT^n$; for a~flat product metric on $T^*\bR^n$, $1/a$ is exactly the length of the $S^1$ fiber.
\end{rem}

\begin{rem}
Theorem~\ref{tm:1} too follows from a~quantitative statement. For certain tori that are close enough to the divisor, we shall show that there is an a~priori upper bound on the distance between the torus and the divisor. This upper bound is linear in a~certain flux distance to the divisor, for small enough flux distance.
\end{rem}

Before proceeding, we point out an elementary obstruction: if $g_J$ is geometrically bounded, G\"unther's volume comparison~\cite{Gallot,Gunther} implies uniform lower bounds on volumes of balls of radius smaller than the injectivity radius.
In particular, \emph{the complement of a~compact subset of a~symplectic manifold is not geometrically bounded.} That is, the theorems above are only non-trivial when the excised set is properly embedded and non-compact. However, \emph{we do not know} whether this is an artifact of this specific notion of completeness. In Section~\ref{subsec:IntroCompleteness}, we revisit the main results considering a~more general notion of completeness. It is not clear whether volume finiteness remains an obstruction. We shall return to this question numerous times in the introduction in connection with $\bC^2\setminus\{0\}$.

We also briefly address the opposite direction.

\begin{ques}
Given $J$, does there exist a~symplectic form $\omega$ on $M$ such that $J$ is $\omega$-compatible and geometrically bounded?
\end{ques}
We prove the following non-existence results.
\begin{tm}\label{tm:3}
Let $M$ be the total space of a~nontrivial effective holomorphic line bundle over a~compact Riemann surface, with its integrable complex structure $J$.
Then $M$ admits no symplectic form $\omega$ such that $J$ is $\omega$-compatible and geometrically bounded.
\end{tm}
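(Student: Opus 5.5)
Let $\pi\colon M\to\Sigma$ be the bundle $L$ with $\deg L>0$ (effective and nontrivial). Suppose, for contradiction, that $\omega$ is a symplectic form making $J$ compatible and $g_J$ geometrically bounded. The plan is to combine monotonicity for $J$-holomorphic curves in geometrically bounded manifolds with the rigidity of holomorphic curves in the total space of a positive line bundle.

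\textbf{Step 1 (a large holomorphic curve family).} Since $L$ is effective, choose a nonzero holomorphic section $s\in H^0(\Sigma,L)$. For every $\lambda\in\bC$ the graph $\Sigma_\lambda=\{\lambda s(z)\}$ is a compact embedded $J$-holomorphic curve. All $\Sigma_\lambda$ are homologous to the zero section $\Sigma_0$. Hence their symplectic areas agree:
\[
\int_{\Sigma_\lambda}\omega=\int_{\Sigma_0}\omega=:A ,
\]
and this common value is finite and independent of $\lambda$. Since $s\not\equiv0$ and $\deg L>0$, $s$ has a zero $z_0$. Every $\Sigma_\lambda$ therefore passes through the fixed point $0_{z_0}\in\Sigma_0$. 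It also passes through $\lambda s(z_1)$ for a fixed $z_1$ with $s(z_1)\neq0$.

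\textbf{Step 2 (diameter bound from monotonicity).} For compatible geometrically bounded $J$, the monotonicity lemma (cf.\ \cite[Section~4.2]{Sikorav}) gives constants $r_0,c>0$ with the following property: any compact $J$-curve through $x$ with boundary outside $B(x,r)$, $r\le r_0$, has area $\ge cr^2$. A standard covering argument then shows that a connected closed $J$-curve of area $A$ has $g_J$-diameter at most $C(A)=2r_0\lceil A/(cr_0^2)\rceil$. Apply this to $\Sigma_\lambda$. The points $0_{z_0}$ and $\lambda s(z_1)$ then lie within distance $C(A)$ of each other for every $\lambda$.

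\textbf{Step 3 (contradiction with completeness).} The points $\lambda s(z_1)$, $\lambda\in\bC$, form the fiber $\pi^{-1}(z_1)\cong\bC$, which is properly embedded. Hence this set leaves every compact subset of $M$ as $|\lambda|\to\infty$. Because $g_J$ is complete, closed bounded sets are compact by Hopf--Rinow. The ball $B(0_{z_0},C(A))$ is therefore compact, but it contains the whole noncompact closed fiber, a contradiction.

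\textbf{Main obstacle.} The delicate step is Step 2: the uniform diameter bound requires the monotonicity constant to be uniform over $M$. That uniformity is exactly what geometric boundedness provides. Compatibility also enters here, through the identity area $=$ energy $=\int\omega$ and the bound $|\omega|_{g_J}\le1$. I would also verify that no tameness subtleties arise, and record where effectiveness and nontriviality are used. Nontriviality produces the zero $z_0$, making the family pass through a fixed point; if $L$ were trivial, the graphs $\Sigma\times\{\lambda\}$ would drift off, and no contradiction arises, as the product $\Sigma\times\bC$ shows.
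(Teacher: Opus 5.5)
Your proof is correct and takes essentially the same route as the paper. Both arguments scale a nonzero holomorphic section to get a family of homologous curves of constant area, obtain a uniform diameter bound from monotonicity, use a zero of the section to pin every curve to a fixed point, and let another point escape to infinity, which contradicts compactness of closed balls. Your direct covering argument for closed curves justifies the diameter bound slightly more cleanly than the paper, which cites a confinement lemma stated relative to $u(\partial\Sigma)$, a set that is empty here.
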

\begin{tm}\label{tm:4}
Let $X$ be a~projective variety with an effective divisor $D$ admitting a~curve $C$ such that $C\cdot D>0$.
Then the total space of $\mathcal O(D)$ admits no symplectic form $\omega$ such that the integrable complex structure $J$ is $\omega$-compatible and geometrically bounded.
\end{tm}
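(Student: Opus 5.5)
The plan is to produce a family of compact $J$-holomorphic curves in $\Tot(\mathcal O(D))$ which all have the same symplectic area and all pass through a fixed point, but whose images reach arbitrarily far in $g_J$. Monotonicity for geometrically bounded $J$ then rules this out.

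\emph{Step 1: choice of curve.} Let $s\in H^0(X,\mathcal O(D))$ be a section with zero divisor $D$. If $C\not\subset D$ we use $C$; otherwise $s|_C\equiv 0$ and we replace $C$. Fix an ample $H$ and take $C'$ to be a general complete intersection curve of members of $|mH|$. Then $C'\not\subset D$, and $D\cdot C'=m^{n-1}D\cdot H^{n-1}>0$ because $D$ is effective and nonzero. Since $D\cdot C>0$ in the original hypothesis, $D\ne 0$, so this applies. Let $\nu\colon\tilde C\to X$ be the normalization. Then $\sigma:=\nu^*s$ is a holomorphic section of $\nu^*\mathcal O(D)$, not identically zero, of degree $D\cdot C>0$. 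Hence $\sigma$ has at least one zero $z_0\in\tilde C$ and is nonzero away from finitely many points.

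\emph{Step 2: the family and its area.} For $t\in\bC$ define $u_t\colon\tilde C\to\Tot(\mathcal O(D))$ by $u_t(z)=t\,\sigma(z)$ in the fiber over $\nu(z)$. Each $u_t$ is holomorphic for the integrable $J$, so it is $J$-holomorphic. The maps $u_t$ are homotopic to $u_0$, the zero section over $\nu$, so
\[
\int u_t^*\omega=\int u_0^*\omega=:A
\]
is independent of $t$. Moreover $u_t(z_0)=p_0:=0_{\nu(z_0)}$ for all $t$.

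\emph{Step 3: escape to infinity.} Pick $z_1$ with $\sigma(z_1)\neq0$. The points $u_t(z_1)$ leave every compact subset as $|t|\to\infty$. Since $g_J$ is complete, closed metric balls are compact (Hopf--Rinow), so $R_t:=\sup_z d_{g_J}(p_0,u_t(z))\to\infty$.

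\emph{Step 4: monotonicity contradiction.} Suppose $\omega$ makes $J$ compatible and geometrically bounded. The standard monotonicity lemma (cf.~\cite[Section~4.2]{Sikorav}) gives constants $r_0,c>0$, depending only on the bounds, with the following property. If $u$ is a compact $J$-curve through $x$ whose boundary, if any, stays outside $B(x,r)$ with $r\le r_0$, then $\operatorname{Area}(u\cap B(x,r))\ge cr^2$. Since $u_t(\tilde C)$ is connected and runs from $p_0$ to distance $R_t$, we choose points $x_k=u_t(z_k)$ with $d(p_0,x_k)=2kr_0$ for $k=0,\dots,\lfloor R_t/2r_0\rfloor$, which exist by connectedness. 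The balls $B(x_k,r_0)$ are disjoint. Since $\tilde C$ is closed, $u_t$ has no boundary, so each ball contributes area $\ge cr_0^2$. Thus
\[
A=\int u_t^*\omega\ \ge\ cr_0^2\,\lfloor R_t/2r_0\rfloor\ \longrightarrow\ \infty,
\]
which is a contradiction. Here the energy $\int u_t^*\omega$ equals the $g_J$-area because $J$ is compatible.

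\emph{Main obstacle.} Steps 2--4 are routine; the real point is Step 1. We must ensure that the section restricted to the curve is neither identically zero nor nowhere vanishing. Vanishing somewhere gives the fixed anchor point $p_0$, and without it the curves could simply drift off with bounded diameter. The degree-positivity $C\cdot D>0$ is exactly what supplies the zero, and the general complete intersection replacement handles the case $C\subset D$. Theorem~\ref{tm:3} is the special case $X$ a Riemann surface, where $C=X$ and effectivity plus nontriviality give positive degree.
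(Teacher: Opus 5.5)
Your proof is correct and follows the same route as the paper. The rescaled curves $t\cdot s_D|_C$ have constant area, stay anchored on the zero section at a zero of $s_D|_C$, and escape to infinity, which contradicts monotonicity/confinement for a geometrically bounded $J$. Your Step~1 (replacing a curve $C\subset D$ by a general complete intersection $C'$ of ample divisors, so that $D\cdot C'>0$) and your direct disjoint-balls diameter bound for the closed curves $u_t$ tidy up two points the paper passes over quickly: the paper simply assumes $C\not\subset D$, and it cites Lemma~\ref{lem:sikorav-improved}, which as stated bounds distance to $u(\partial\Sigma)$ and so must be read as a diameter bound when $\partial\Sigma=\varnothing$.
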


\subsection{Holomorphic-curve mechanism and the surface case}

As already indicated, the proofs of Theorems~\ref{tm:1} and~\ref{tm:2} rely on hard holomorphic curve arguments. As is typical for results of this type, the phenomenon is elementary in dimension $2n=2$, and persists in higher dimensions due to holomorphic-curve mechanisms.
Let us expand on this point.

If $p\in D\subset\bC$ is a~point inside a~disk, then $D\setminus\{p\}$ has finite area, so any complete metric on $D\setminus\{p\}$ must have unbounded geometry.
At the same time, there is also a~holomorphic-curve viewpoint: $\partial D$ is a~Lagrangian circle, and $D$ is a~$J$-holomorphic Maslov index $2$ disk with nonvanishing open Gromov--Witten invariant; confinement estimates then yield distance bounds.
This latter mechanism persists in higher dimensions and underlies Theorem~\ref{tm:1}.

Consider now Proposition~\ref{prop:flux-distance} in the $n=1$ case. One might try to vary $J$ so that $g_J$ is stretched in the $\bR$-direction and compressed in the circle direction, in order to make the distance between two circle fibers as large as possible. It is elementary to see that if one insists on a~uniform sectional curvature bound, such stretching forces the circle fibers to become very short somewhere along the way, and hence forces the injectivity radius to become small.

But there is also a~holomorphic-curve mechanism. One can create a~Maslov index $2$ disk with boundary on one of the circle fibers by a~boundary reduction, or, symplectic cutting, at the other end. The resulting open Gromov--Witten invariant again yields a~distance bound. The boundary reduction is why in higher dimensions the result involves the distance between a~Lagrangian fiber and a~real hypersurface.

This suggests the following question.
\begin{ques}
In the setting of Theorem~\ref{tm:2}, fix two distinct torus fibers $L_p,L_q\subset U\times\bT^n$.
Is there an analogue of Proposition~\ref{prop:flux-distance} giving an \emph{a priori} upper bound on $d_{g_J}(L_p,L_q)$ in terms of a~symplectic/flux quantity depending only on the straight-isotopy flux $p-q\in H^1(\bT^n;\bR)$ (equivalently, an integral-affine notion of distance between $p$ and $q$)?
\end{ques}
One is naturally led to consider open Gromov--Witten invariants defined by counting $J$-holomorphic \emph{annuli} with boundary on $L_p\cup L_q$ and with varying modulus.
For primitive boundary classes, one can show (by a~Gromov compactness argument) that an appropriate $0$-dimensional moduli space defines a~well-defined invariant.
In dimension $n=1$, this annulus invariant equals~$1$.
Unfortunately, in higher dimensions the same invariant can be shown to vanish identically. To see this, note first that by the Fukaya trick, the invariant as a~function of $q$, for fixed $p$, is locally constant. On the other hand, in dimension $\ge 2$, one can continuously move $q$ to a~position where the flux from $p$ to $q$ is non-positive. This forces the invariant to vanish by energy considerations.

Finally, since the holomorphic-curve input behind Theorems~\ref{tm:1} and~\ref{tm:2} is inherently codimension $2$, it is natural to ask the following.
\begin{ques}
Do analogues of Theorem~\ref{tm:1} hold for higher-codimension symplectic submani\-folds?
\end{ques}

\subsection{Flexibility of coisotropic excision}\label{sec:flexibility}
The rigidity phenomena above contrast sharply with flexibility results in the \emph{coisotropic} setting. Let us consider two related mechanisms.
\begin{itemize}
\itemsep=0pt
\item \emph{Symplectic excision.}
In~\cite{KarshonTangExcision,StratmannParamRays}, one removes a~closed coisotropic subset $Z\subset (M,\omega)$ (e.g., a~ray, or more generally a~parametrized ray) and constructs a~symplectomorphism \[
\Phi\colon\ M\setminus Z \longrightarrow M.\]
In particular, if $J$ is a~geometrically bounded $\omega$-compatible almost complex structure on~$M$, then $\Phi^*J$ is geometrically bounded on $M\setminus Z$.

\item \emph{Integrable anti-surgery.}
\cite{GromanVarolgunesCompleteEmbeddings} construct a~variety of examples of \emph{complete embeddings}, i.t., excisions $Y:=M\setminus Z$ where $Y$ is \emph{not} symplectomorphic to $M$ but is nevertheless geometrically bounded and embeds symplectically $\iota\colon Y\hookrightarrow M$ with good control at infinity.
These examples arise in particular from singular Lagrangian torus fibrations and an explicit modification procedure called \emph{integrable anti-surgery}.
\end{itemize}

In both cases, flexibility has a~quantitative version: In the symplectic excision case, the symplectomorphism $\Phi$ can be chosen to be the identity outside an arbitrarily small prescribed neighborhood of $Z$. In particular, the distance from a~prescribed compact set to the excised region can be made arbitrarily large while maintaining fixed bounds on the geometry. The case of integrable anti-surgery is similar in this respect. See \cite[Theorem~7.33]{GromanVarolgunesCompleteEmbeddings} for a~precise statement.\looseness=1

Let us emphasize a~key consequence: by excising a~suitably chosen embedded coisotropic subset $Z$, \emph{one can make the distance between \emph{any two prescribed points}---or more generally between a~prescribed point and a~prescribed compact subset---\emph{arbitrarily large} while maintaining fixed bounds on the geometry.}

\subsubsection{A basic 4-dimensional example}
The following example taken from \cite[Remark~7.32]{GromanVarolgunesCompleteEmbeddings} illustrates the contrast between the rigidity and flexibility phenomena. Consider the complex surface
$
X:=\bC^2\setminus\{xy=1\}$,
equipped with the symplectic form
\[
\omega=\operatorname{Im}\frac{{\rm d}x\wedge {\rm d}y}{xy-1}.\]
The manifold $X$ admits a~Lagrangian torus fibration (the Auroux fibration) with a~single focus--focus singular fiber; see \cite[Section~5]{AurouxAnticanonicalTDuality} for a~discussion. It is shown in~\cite{GromanVarolgunesCompleteEmbeddings} that $X$ with this symplectic form is geometrically bounded.

Emanating from the origin (which is a~focus--focus singularity for the Auroux fibration), there are two notable configurations: First, a~pair of properly embedded Lagrangian discs $L^{+}$ and $L^{-}$ (called Lagrangian ``tails'' in~\cite{GromanVarolgunesCompleteEmbeddings}), lying over the monodromy-invariant line in the base of the Auroux fibration. Second, the degenerate cone given by the equation $xy=0$, which consists of a~pair of properly embedded holomorphic copies of $\bC$ intersecting at the origin. This configuration projects to a~properly embedded curve which is transverse to the monodromy-invariant line in the base.

It is shown in~\cite{GromanVarolgunesCompleteEmbeddings} that integrable anti-surgery allows one to \emph{excise either Lagrangian tail while preserving geometric boundedness}; in fact, excising one tail recovers the cotangent bundle $T^*\bT^2$. By contrast, the present paper shows that \emph{excising components of the degenerate cone obstructs geometric boundedness.}

\begin{rem}
In the example above, $X\setminus C_-$ contains the punctured holomorphic curve $C_+$. One might wonder whether the obstruction is explained, forgoing holomorphic curves, simply by the appearance of a~properly embedded symplectic surface whose end has finite area when enclosed by loops.
However this phenomenon alone is not an obstruction: such symplectic surfaces can occur inside geometrically bounded symplectic manifolds.
For example, in the cotangent model $\bR^2\times \bT^2$ with $\omega={\rm d}p_1\wedge {\rm d}q_1+{\rm d}p_2\wedge {\rm d}q_2$, take a~properly embedded curve $\gamma\subset\bR^2$ which is asymptotic to the coordinate axes and has finite total $p_1$-variation along one end; then the surface
\[
S:=\big\{(p,q)\mid p\in\gamma,\, q_2=0,\, q_1\in\bT^1\big\}
\]
is symplectic and has an end of finite symplectic area yet the ambient manifold is geometrically bounded.
\end{rem}
\subsection{Completeness in symplectic topology}\label{subsec:IntroCompleteness}
The obstructions to geometric boundedness in Question~\ref{ques:1} come from $J$-holomorphic curves with finite energy connecting distant compact subsets. This suggests viewing geometric boundedness as a~symplectic counterpart of geodesic completeness~\cite{McDuffSymplecticStructures}, with $J$-holomorphic curves playing the role of geodesics.

However, geometric boundedness has drawbacks: it involves curvature (second derivatives), and the space of geometrically bounded structures may not be contractible or even connected. At the other extreme, one can consider the weakest completeness property.

\begin{df}[$J$-completeness]\label{df:Jcomplete}
An $\omega$-tame almost complex structure $J$ on a~symplectic manifold $(M,\omega)$ is said to be \emph{$J$-complete} if the associated Riemannian metric $g_J$ is complete and for every compact subset $K\subset M$ and every $E>0$ there exists $R>0$ such that any connected $J$-holomorphic curve whose boundary meets both $K$ and $M\setminus B_R(K)$ has energy at least $E$.
\end{df}
This is stronger than metric completeness but too weak to control the topology of the space of $J$-complete structures.

We now consider an intermediate notion between $J$-completeness and geometric boundedness which is of a~$C^0$ nature and which gives rise to a~contractible condition. It is closely related to that of intermittent boundedness which was introduced in~\cite{GromanFloerOpen}. For each $J$, we construct in Section~\ref{sec:norm-complete} an intrinsically defined continuous \emph{scale function} $\rho_J\colon M\to(0,+\infty]$ measuring the isoperimetric scale at each point. The \emph{normalized distance} $d_{\mathrm{norm},J}$ measures path lengths in units of $\rho_J$
\[
L_{\mathrm{norm},J}(\gamma):=\int_{0}^{1}\rho_J(\gamma(t))|\dot\gamma(t)|_{g_J}{\rm d}t,
\]
with $d_{\mathrm{norm},J}$ the induced path metric. The scale function depends on the choice once and for all of a~cutoff constant $c_0>\frac1{4\pi}$. The constant $c_0$ measures the allowable deviation of the metric from being flat whereas the scale function at a~point $p$ measures the size of a~neighborhood in which the metric does not deviate from flatness more than is allowed by the cutoff constant.

\begin{df}[normalized completeness]\label{df:normalized-complete-intro}
An $\omega$-compatible almost complex structure $J$ is \emph{normalized complete} if the metric space $(M,d_{\mathrm{norm},J})$ is complete.
\end{df}
\begin{rem}
In this definition and Theorem~\ref{tm:normalized-completeness}, we focus on the $\omega$-compatible case. The statements are true for the $\omega$-tame case with a~very slight adjustment. See Remark~\ref{rem:omega-tame}.
\end{rem}

\begin{tm}[basic properties of normalized completeness]\label{tm:normalized-completeness}
Let $(M,\omega)$ be a~symplectic mani\-fold.
\begin{enumerate}
\itemsep=0pt
\item[$(1)$] \emph{Connectedness.}
The space of normalized complete $\omega$-compatible almost complex structures is path connected.

\item[$(2)$] \emph{Confinement and $J$-completeness.}
If $J$ is normalized complete, then $J$ is $J$-complete {\rm(}Definition~{\rm\ref{df:Jcomplete})}.
More precisely, any connected $J$-holomorphic curve $u\colon (\Sigma,\partial\Sigma)\to(M,\omega)$ satisfies a~confinement estimate of the form
\[
u(\Sigma)\subset B^{d_{\mathrm{norm},J}}_{CE(u)+C'}(u(\partial\Sigma)),
\]
where \smash{$E(u)=\int_\Sigma u^{*}\omega$} and the constants $C,C'>0$ are universal constants.

\item[$(3)$] \emph{Equivalence to standard metric under uniform bounds.}
If $g_J$ has uniformly bounded sectional curvature and uniformly positive injectivity radius, then $d_{\mathrm{norm},J}$ and $d_{g_J}$ are bi-Lipschitz equivalent. In this case, normalized completeness is equivalent to metric completeness of $g_J$.
\end{enumerate}
\end{tm}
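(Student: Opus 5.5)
The scale function $\rho_J$ is only described informally at this point, so the plan fixes a concrete version first. For $p\in M$, let $r_J(p)\in(0,+\infty]$ be the supremum of radii $r$ with the following two properties. First, the closed ball $\bar B_r(p)$ in $g_J$ is compact. Second, every $J$-holomorphic curve (more generally, every surface) $u$ with $u(\Sigma)\subset B_s(q)$, $\partial u\subset \partial B_s(q)$, passing through $q$, for any $q\in B_{r/2}(p)$ and $s\le r/2$, satisfies the monotonicity-type bound $E(u)\ge s^2/(2c_0)$. Equivalently, the isoperimetric inequality $\mathrm{Area}\le c_0\,\mathrm{length}^2$ holds for loops of length $\le r$ near $p$. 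Set $\rho_J:=1/r_J$. Since $c_0>\frac1{4\pi}$, the flat constant is strictly admissible, so $r_J>0$ everywhere. The triangle inequality gives $|r_J(p)-r_J(q)|\le 2d_{g_J}(p,q)$, so $\rho_J$ is continuous and $\rho_J\cdot d$ is locally comparable to $1$.

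\emph{Step (3)} is the easiest and I would do it first. If $|\Sec|\le K$ and $\inj\ge i_0$, standard comparison geometry (the filling/isoperimetric inequality in a ball of radius $\min(i_0,\pi/(2\sqrt K))$ with constant tending to $\frac1{4\pi}$ at small scale) gives $r_J\ge c(K,i_0)>0$. In the other direction, the isoperimetric inequality at scale $r$ yields volume growth, since slices of balls have area at least $s^2/(2c_0)$. Hence only a lower bound on $\rho_J$ is needed for bi-Lipschitz equivalence of the distances. If $\rho_J$ is not bounded below, we get $d_{\mathrm{norm}}\lesssim d_{g_J}$ on the region where $\rho_J\le C$ in any case. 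For bi-Lipschitz, I would instead normalize $\rho_J:=\max(1,1/r_J)$, or note that the definition caps $r_J$ by $1$. Either way the two metrics have the same Cauchy sequences, which gives the equivalence of completeness.

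\emph{Step (2).} Let $u$ be connected and pick $z$ with $d_{\mathrm{norm}}(u(z),u(\partial\Sigma))=T$. Cover a normalized path from $u(z)$ toward the boundary by disjoint $g_J$-balls $B_{r_J(q_k)/4}(q_k)$ centered at points $q_k$ of $u(\Sigma)$, chosen greedily along level sets of $d_{\mathrm{norm}}(\cdot,u(\partial\Sigma))$ at spacing roughly $1$ in normalized units. Because $\rho_J$ varies by a bounded factor on such balls (Lipschitz property above), each ball has normalized radius $\asymp 1$. So one obtains $\gtrsim T-C'$ disjoint balls, each not containing boundary points and each met by the connected curve through its center. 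Monotonicity, which is exactly the defining isoperimetric property, gives energy $\ge r^2/(32c_0)$ in each ball. \textbf{Main obstacle:} this energy is in units of $r_J^2$, not a universal unit, so the count alone bounds $\sum r_k^2$, not $T$. To fix it, I would use the dimensionless normalization of $\omega$ implicit in the definition: the isoperimetric scale is measured relative to $\omega$-area. I would therefore define $r_J$ so that each admissible ball carries symplectic area of order $1$ (replace $r$ by $\min(r,1)$ and use $\rho_J$ as the conformal factor making the rescaled metric $\rho_J^2 g_J$ have unit scale). Then each ball contributes energy $\ge c$, giving $T\le CE(u)+C'$. Completeness of $d_{\mathrm{norm}}$ makes the balls $B^{d_{\mathrm{norm}}}_R(K)$ relatively compact, which yields $J$-completeness. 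The metric $g_J$ itself is complete because $\rho_J\ge 1$ under the capping convention.

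\emph{Step (1).} Given normalized complete $J_0,J_1$, the usual path through compatible structures (via the Cayley/contractible-fibre parametrization) need not stay complete. Instead, the plan is to construct for each $J_i$ a complete "bounded-at-unit-scale" reference and connect through them. Choose an exhaustion and a proper function $f$ that is Lipschitz for both $d_{\mathrm{norm},J_0}$ and $d_{\mathrm{norm},J_1}$ (take $f=d_{\mathrm{norm},J_0}(x_0,\cdot)+d_{\mathrm{norm},J_1}(x_0,\cdot)$, smoothed). Then define $J_t$ pointwise by interpolating in the contractible fibre, using local scale comparison: on each unit normalized ball, the $J_i$ are both $C^0$-quasi-flat, so any fibrewise convex-type interpolation keeps the isoperimetric constant bounded. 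A bounded constant is enough, because the cutoff $c_0$ can be absorbed by shrinking the scale by a fixed factor. Then $\rho_{J_t}\ge c\min(\rho_{J_0},\rho_{J_1})$, and completeness of $d_{\mathrm{norm},J_t}$ follows from properness of $f$. The technical check that $C^0$ interpolation preserves the isoperimetric inequality up to constants is the step I expect to need the most care.
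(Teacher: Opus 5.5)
There is a genuine gap in (2), and it comes from the normalization you fix at the start. With $\rho_J=\max(1,1/r_J)$, the normalized distance measures length in units of the local scale and is dimensionless, while $E(u)$ has units of area. As you observe, monotonicity then only bounds $\sum_k r_k^2$ by a multiple of $c_0E(u)$, which does not bound the number of balls of unit normalized size once $r_k\to0$. Your repair does not change this. Rescaling the metric does not rescale $\omega$, so a ball of unit $\rho_J^2g_J$-radius is a $g_J$-ball of radius $\min(r_J,1)$, and it carries energy only of order $\min(r_J,1)^2$, not at least a fixed $c$.

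In fact the statement is false for your convention. Because you require $\bar B_r(p)$ to be compact, $r_J(p)$ is at most the $g_J$-distance from $p$ to the metric boundary. So every path running into the metric boundary has infinite normalized length, and \emph{every} $J$ is normalized complete in your sense. For instance, the flat $J$ on $\bC^2\setminus\{0\}$ would be normalized complete, which would also trivialize the paper's closing question, but it is not $J$-complete. The discs $z\mapsto(z,\epsilon)$, $|z|\le1$, have energy $\pi$ and boundary in a fixed compact set, yet reach normalized distance at least $\log(1/\epsilon)$ from it. Likewise, ``$g_J$ is complete because $\rho_J\ge1$'' is backwards: $\rho_J\ge1$ gives $d_{\mathrm{norm},J}\ge d_{g_J}$, and completeness does not pass to the smaller metric.

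The missing idea is the opposite normalization, which is the one the paper uses: $\rho_J=\min\{1,r_{\mathrm{iso},J}\}$ as the multiplicative factor. Then $\rho_J\le1$, so $d_{\mathrm{norm},J}\le d_{g_J}$, normalized completeness forces completeness of $g_J$, and $d_{\mathrm{norm},J}$ has units of area. A $g_J$-ball of radius $a=\rho_J(x)$ has normalized diameter at most $60a^2$ by the $1$-Lipschitz property of $\rho_J$. This is matched by the monotonicity lower bound $a^2/(4c_0)$ on the energy inside it. A Vitali cover of the core $\{x\in u(\Sigma)\mid d_{g_J}(x,A)\ge\rho_J(x)\}$ by disjoint such balls gives $\sum_i a_i^2\le4c_0E(u)$, hence $d_{\mathrm{norm},J}(p,A)\le240c_0E(u)+2$.

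Part (1) also has a gap. Your argument needs two things: that a fibrewise interpolation $J_t$ satisfies $\rho_{J_t}\ge c\min(\rho_{J_0},\rho_{J_1})$, and that $f$ is Lipschitz for $d_{\mathrm{norm},J_t}$. You only know $f$ is Lipschitz for the two endpoint metrics, and the first claim is left as the ``technical check''. Neither follows from information at the endpoints. The fibre of compatible structures is a noncompact symmetric space, and a path between two distant points must pass through structures whose metrics are not uniformly comparable to either endpoint. For the constant metrics $\mathrm{diag}(\lambda,\lambda^{-1})$ and $\mathrm{diag}(\lambda^{-1},\lambda)$ on $\bR^2$, some intermediate compatible metric is distorted by a factor at least $\lambda$ relative to both. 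So nothing controls $g_{J_t}$-lengths, let alone the isoperimetric scale of $J_t$.

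The paper never estimates intermediate geometry. It interlaces the exhaustions of $M$ by closed $d_{\mathrm{norm},J_0}$- and $d_{\mathrm{norm},J_1}$-balls, which are compact by completeness, and extracts pairwise disjoint shells $V_i^0$, $V_i^1$ of normalized width at least $1$. It then chooses a path with $J_s=J_0$ on $\bigcup_iV_i^0$ for $s\le2/3$ and $J_s=J_1$ on $\bigcup_iV_i^1$ for $s\ge1/3$. Every $J_s$ coincides with a normalized complete structure on infinitely many shells, so a $d_{\mathrm{norm},J_s}$-Cauchy sequence crosses only finitely many of them and is trapped in a compact ball. Your part (3) is fine: $r_J\ge c(K,i_0)$ together with the cap gives two-sided bounds on $\rho_J$.
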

Proofs are given in Section~\ref{sec:norm-complete}.

\begin{rem}[weak contractibility]
The space of normalized complete almost complex structures is path connected, but not necessarily weakly contractible as a~\emph{topological space}. Rather, it can be given the structure of an \emph{$\infty$-groupoid} which is contractible. Namely, one restricts to certain \emph{admissible} paths modeled on the zig--zag construction in the proof of Theorem~\ref{tm:normalized-completeness}\,(1); see~\cite{GromanFloerOpen} for the analogous framework with uniform intermittent boundedness.
\end{rem}

\begin{rem}
By considering, the set of almost complex structures $J$ that are normalized complete for all cutoff constants $c_0 > \frac{1}{4\pi}$, we obtain a~contractible class of almost complex structures whose definition involves no arbitrary choices.
By standard comparison theorems, this contractible class contains all geometrically bounded metrics, since curvature precisely controls the scale at which deviation from flatness occurs above any given threshold.
For the purposes of Floer theory, one generally uses perturbations that decay at infinity, and this class is closed under such perturbations.
Another choice-free contractible class consists of all $J$ whose associated metric is uniformly equivalent to one in the previous class. Any such $J$ is easily seen to be normalized complete with respect to some cutoff constant, providing a~notion that is stable under uniform equivalence.
\end{rem}

\begin{rem}
 The previous remark allows us to define a~robust class of normalized complete structures with good properties and no arbitrary choices.
 However, to define an actual metric~$d_{\mathrm{norm},J}$, one must commit to a~specific choice of cutoff constant and scale function. For a~given~$J$, the resulting normalization can depend very strongly on this choice, yielding distances that are not necessarily uniformly equivalent for different cutoffs.
 The exception is when~$J$ is already geometrically bounded: in that case, the resulting normalized metrics are always bi-Lipschitz equivalent to the standard metric $g_J$ (see Theorem~\ref{tm:normalized-completeness}\,(3)).
 \end{rem}

\begin{tm}[reformulation with normalized completeness]\label{tm:norm-complete-reformulation}
The main results of this work hold with geometric boundedness replaced by normalized completeness. Concretely:
\begin{itemize}\itemsep=0pt
\item {\bf Theorem~\ref{tm:1}.} If $(M,\omega)$ is normalized complete {\rm(}with the same hypotheses on~$M$ and~$\omega$ as in Theorem~{\rm\ref{tm:1})} and $N\subset M$ is a~properly embedded symplectic hypersurface, then $M\setminus N$ is not normalized complete.
\item {\bf Theorem~\ref{tm:2}.} Let $U\subset\bR^n$ be convex. If there exists an $\omega$-compatible almost complex structure on $U\times\bT^n$ {\rm(}with the standard symplectic form{\rm)} that is normalized complete, then~${U=\bR^n}$.
\item {\bf Proposition~\ref{prop:flux-distance}.} For a~universal constant $C$, we have
\[
d_{\mathrm{norm},J}(L_p,(\bR^n\setminus U)\times\bT^n) <C\delta_{\mathrm{flux}}(p;\partial U)
\]
provided $\delta_{\mathrm{flux}}(p;\partial U)\gg 1$.
\item {\bf Theorem~\ref{tm:3}.} The total space of a~nontrivial effective holomorphic line bundle over a~compact Riemann surface, with its integrable complex structure $J$, admits no symplectic form $\omega$ such that $J$ is $\omega$-compatible and normalized complete.
\item {\bf Theorem~\ref{tm:4}.} Let $X$ be a~projective variety with an effective divisor $D$ admitting a~curve~$C$ such that $C\cdot D>0$. Then the total space of $\mathcal{O}(D)$ admits no symplectic form $\omega$ such that the integrable complex structure $J$ is $\omega$-compatible and normalized complete.
\end{itemize}
\end{tm}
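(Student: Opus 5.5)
The plan is to run the geometric-boundedness proofs of Theorems~\ref{tm:1}--\ref{tm:4} a second time, checking that they use geometric boundedness through only one property: a confinement estimate for $J$-holomorphic curves together with metric completeness. Once the proofs are arranged this way, Theorem~\ref{tm:normalized-completeness}\,(2) supplies exactly this input for normalized complete $J$. It gives the bound $u(\Sigma)\subset B^{d_{\mathrm{norm},J}}_{CE(u)+C'}(u(\partial\Sigma))$ with universal constants.

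\textbf{Proposition~\ref{prop:flux-distance}.} I would first rewrite the proof of Proposition~\ref{prop:flux-distance} in three steps.

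(a) Choose a primitive $v$ nearly realizing $\delta_{\mathrm{flux}}(p;\partial U)$. Perform a boundary reduction (symplectic cut) along the supporting hyperplane $\{\langle v,x\rangle=\sup_{\partial U}\langle v,\cdot\rangle\}$ (or slightly beyond), producing a manifold containing $L_p$ and a divisor at the cut.

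(b) Show that the open Gromov--Witten count of Maslov index~$2$ discs on $L_p$ in the class determined by $v$ is nonzero. The energy of these discs equals the flux $\langle v,p-x\rangle$, which is at most about $\delta_{\mathrm{flux}}$. This count is independent of $J$ among admissible $J$ that agree with the cut model near the divisor.

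(c) For a given $J$ on $U\times\bT^n$, interpolate it to a cut-compatible structure only outside $U\times\bT^n$. A disc from the invariant then has boundary on $L_p$ and must reach the region outside $U$. Confinement bounds the distance by $C\cdot\mathrm{energy}+C'$.

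In the normalized setting, step~(c) is where Theorem~\ref{tm:normalized-completeness}\,(2) enters. It gives $d_{\mathrm{norm},J}\le C\delta_{\mathrm{flux}}+C'$, and the additive constant is absorbed once $\delta_{\mathrm{flux}}\gg1$. Theorem~\ref{tm:2} then follows as before. If $U\ne\bR^n$, a half-space supporting $U$ gives $\delta_{\mathrm{flux}}(p;\partial U)<\infty$ for every $p$, so $L_p$ stays at bounded normalized distance from the missing region. This contradicts completeness of $(U\times\bT^n,d_{\mathrm{norm},J})$: take a path from $L_p$ of bounded normalized length leaving every compact set, obtained as a limit of curves whose boundary lies in $L_p$.

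\textbf{Theorem~\ref{tm:1}.} The same scheme applies. Take a normalized complete $J$ on $M\setminus N$, and take a Lagrangian torus near $N$ (a circle bundle over a small torus in $N$) bounding Maslov~$2$ discs with nonzero open GW invariant. Suppose for contradiction that $J$ is normalized complete on $M\setminus N$. Using the path-connectedness statement, Theorem~\ref{tm:normalized-completeness}\,(1), and the rationality and finite spherical homology hypotheses exactly as before (to control sphere bubbling), the invariant persists for a structure agreeing with $J$ on compact parts of $M\setminus N$. Its discs have bounded energy but must hit $N$. Confinement then bounds the normalized distance from the torus to $N$, contradicting completeness. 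Theorems~\ref{tm:3} and~\ref{tm:4} are handled similarly. There the geometric-boundedness proof uses a fixed-energy family of holomorphic curves escaping to infinity (fibres, or curves over $C$ with $C\cdot D>0$), and the contradiction again comes from confinement alone.

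\textbf{Main obstacle.} I expect the difficulty to be the step that glues or interpolates $J$ near the excised set (or in the cut region) while keeping a confinement estimate valid on the region we care about. For geometrically bounded $J$ this used curvature control. Here I would instead prove a localized version of Theorem~\ref{tm:normalized-completeness}\,(2): $\rho_J$ is defined by local isoperimetric data, so on the set where the interpolated structure equals $J$, the monotonicity and confinement argument should apply to the portion of a curve lying in that set. That gives a lower bound on the energy any curve needs to cross a region of large normalized width. I would try to prove this localized confinement directly from the definition of $\rho_J$ in Section~\ref{sec:norm-complete}.
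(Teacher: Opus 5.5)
Your overall reduction matches the paper's. The proof of Theorem~\ref{tm:norm-complete-reformulation} replaces Lemma~\ref{lem:local-confinement} by the normalized local confinement Lemma~\ref{lem:normalized-local-confinement}. That lemma's last clause says the containment does not see $J$ outside the normalized ball of radius $\rho$, which is exactly the localization you name as the main obstacle.

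\textbf{The gap: no small-energy confinement.} Theorem~\ref{tm:normalized-completeness}\,(2) gives only a radius $CE(u)+C'$, and your localization extracts only a large-scale statement. The additive constant is harmless in the final contradictions. Completeness makes closed normalized balls compact (Lemma~\ref{lem:bounded-balls-compact}), so $R$ can be taken as large as you like, and $\delta_{\mathrm{flux}}\gg1$ absorbs $C'$ in Proposition~\ref{prop:flux-distance}. It is not enough for Lemma~\ref{lem:degree-one}, which your Theorem~\ref{tm:1} argument takes over unchanged. There, $\deg(\ev)=1$ is obtained from a reference structure that is standard on the Darboux chart, by showing that discs of energy $\pi t^2$ with boundary on $L_t$ cannot leave the chart. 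Before, this came from Lemma~\ref{lem:sikorav} for a geometrically bounded structure on $M$. Now $M$ is only normalized complete, and a normalized radius $C\pi t^2+C'\ge C'$ does not shrink as $t\to0$. Since $\rho_J\le1$, that normalized ball even contains the $g_J$-ball of radius $C'$. You need the small-energy regime $\rho=\max\bigl(C_1E,C_2\sqrt{E}\bigr)$ of Lemma~\ref{lem:normalized-local-confinement}. Its $\sqrt{E}$ term comes from monotonicity on the ball of radius $L<\rho_J(q)$ about a point $q$ outside the core. Alternatively, use monotonicity directly in the flat chart.

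\textbf{Your route to Proposition~\ref{prop:flux-distance}.} This differs from the paper's and is not correct as stated. The paper never interpolates outside $U\times\bT^n$. It takes the compact ball $B=B_R(L_p)\subset U\times\bT^n$ and uses Lemma~\ref{lemma2.11} to get a compact Delzant polytope $P$ with $\pi(B)\subset\operatorname{Int}P\subset U$ and a facet with normal $v$. It then extends $J|_B$ arbitrarily to the compact toric manifold $X_P$. The $\Psi$-invariant (Proposition~\ref{prop:toric-disc-Psi}) gives a disc component of area $\le\min_F\omega_P(\beta_F)\le\delta+\varepsilon$. That disc must meet $\mu_P^{-1}(\partial P)$ because $L_p$ is exact in the interior.

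In your single-cut version the cut manifold $X_v\cong\bC\times T^*\bT^{n-1}$ is noncompact. So invariance of the $\beta_v$-count among structures that merely agree with the cut model near the divisor is not available without control at infinity. Interpolating ``only outside $U\times\bT^n$'' is a noncompact interpolation, and it leaves no room at all if you cut exactly at the supporting hyperplane.

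It can be repaired. Take the modified structure equal to the standard model outside a compact neighbourhood of $B$. Then $\pi_2(X_v)=0$ and $\pi_2(X_v,L_p)\cong\bZ\beta_v$ exclude bubbling, compactly supported deformations keep the moduli spaces compact, and the count is $1$. This route avoids the $\Psi$-invariant at the cost of a noncompact target. In either version your ``main obstacle'' disappears, because only $J|_B$ on a compact set enters.

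\textbf{Smaller points.}
\begin{itemize}
\item In Theorem~\ref{tm:3} the curves are the closed rescaled sections $m_t\circ\sigma$, pinned at a zero of $\sigma$, not fibres.
\item In Theorem~\ref{tm:2} no limiting path is needed. Compactness of normalized balls, applied with $V=U\times\bT^n$, suffices.
\item Your claim that $\delta_{\mathrm{flux}}(p;\partial U)<\infty$ needs an integral $v$ with $c_{\partial U}(v)>-\infty$. This fails for a half-space whose normal is not proportional to an integral vector, where $\delta_{\mathrm{flux}}=+\infty$. The paper's proof of Theorem~\ref{tm:2} makes the same assertion, so this is not specific to your adaptation.
\end{itemize}
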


We conclude with the following question.

\begin{ques}
 Does $\bC^2\setminus\{0\}$ admit a~normalized complete $\omega$-compatible almost complex structure?
 \end{ques}

\subsection{Fukaya-categorical completeness}\label{subsec:fukaya-complete}
So far we have considered completeness from a~geometric point of view.
One can also consider completeness in terms of moduli spaces of objects in the Fukaya category. That is, completeness in the sense that there are no \emph{missing objects}. To discuss this we recall the definition of the flux of a~Lagrangian isotopy and the star-shape.

Let $I\colon L\times[0,1]\to M$ be a~smooth isotopy through Lagrangian embeddings, and write $\iota_s:=I(\cdot,s)$.
For each $s$, let $X_s$ be the vector field along $\iota_s(L)$ defined by $X_s\circ \iota_s=\partial_s I$.
Because~$\iota_s(L)$ is Lagrangian, the $1$-form $\lambda_s:=\iota_s^*(\iota_{X_s}\omega)$ on $L$ is closed.
The \emph{(symplectic) flux} of the isotopy from $s=0$ to $s=1$ is the cohomology class
\[
\mathrm{Flux}(I) := \left[\int_0^1 \lambda_s{\rm d}s\right] \in H^1(L;\bR).
\]
 A~\emph{star-isotopy} is a~Lagrangian isotopy whose flux develops linearly in time. The \emph{star-shape} ${\rm Sh}^*(L) \subset H^1(L; \bR)$ of a~Lagrangian $L$ is the set of all fluxes of star-isotopies starting at $L$ (see~\cite{EntovGanorMembrez,STVFluxShape}). It can be thought of as a~tropicalization of a~moduli space of objects of the Fukaya category which are connected to~$L$.
The following proposition is proved at the end of Section~\ref{subsec:Proof:tm:1}.
\begin{Proposition}[{cf.\ \cite[Theorem~2.2]{STVFluxShape}}]\label{prop:starshape-hypersurface}
Let $(M,\omega)$ be a~geometrically bounded symplectic manifold.
Let $N$ be a~properly embedded symplectic hypersurface.

For each $p\in N$, and a~sufficiently small open neighborhood $U \subset M$ of $p$, there exists a~Lagrangian $L\subset U$, and a~class $\alpha\in\pi_2(M,L)$ with $\alpha\cdot N=1$, such that the following holds.
Let ${\rm Sh}^*(L; M\setminus N)\subset H^1(L;\bR)$ be the star-shape of $L$ relative to $M\setminus N$. Then the star-shape~${{\rm Sh}^*(L; M\setminus N)}$ is strictly contained within the open affine half-space
$
\bigl\{\mathfrak f\in H^1(L;\bR) \mid \mathfrak f\cdot\partial\alpha>-\omega(\alpha)\bigr\}$.
Furthermore, the boundary of ${\rm Sh}^*(L; M\setminus N)$ intersects the affine bounding hyperplane~${\{\mathfrak f\cdot\partial\alpha=-\omega(\alpha)\}}$ in a~nontrivial open subset. Points in this boundary subset arise as limits of star-fluxes realized by Lagrangian tori embedded entirely within $M\setminus N$.

\end{Proposition}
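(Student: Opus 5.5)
Near $p$ we use a Darboux--Weinstein chart adapted to $N$. There are symplectic coordinates $(z_1,\dots,z_n)$ on a neighborhood $U$ of $p$ in which $N=\{z_1=0\}$ and $\omega$ is standard. Inside $U$ take the product torus $L=\{|z_1|=r_1,\dots,|z_n|=r_n\}$ with all $r_i$ small. Let $\alpha\in\pi_2(M,L)$ be the class of the disk $\{|z_1|\le r_1,\ z_j=\text{const}\ (j\ge2)\}$. Then $\alpha\cdot N=1$ and $\omega(\alpha)=\pi r_1^2$. This is the standard local model of a Clifford-type torus near a divisor. For the product $J_0$, $\alpha$ is represented by a unique regular Maslov index $2$ disk through each point of $L$, so $n_\alpha(L)=\pm1$. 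For other classes of Maslov index $2$ we have $\beta\cdot N\ne 1$ unless $\beta=\alpha+(\text{classes with boundary disjoint from the local picture})$. We therefore set up the count as a relative invariant. It counts Maslov $2$ disks in class $\alpha$ meeting $N$ once and evaluating at a fixed point of $L$. Degenerations of disks with intersection number $1$ with $N$ can bubble off spheres or disks of intersection $0$, which we exclude by taking $U$ and the $r_i$ small. Such bubbles would need energy at least some $\hbar>0$, and this is uniform on compact sets by geometric boundedness (monotonicity). We take $\omega(\alpha)=\pi r_1^2<\hbar$.

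Next we prove the half-space bound. Suppose $I$ is a star-isotopy in $M\setminus N$ from $L$ to $L_1$ with flux $\mathfrak f$. The class $\alpha$ transports along $I$ to a class $\alpha_t$ with $\omega(\alpha_t)=\omega(\alpha)+t\,\mathfrak f\cdot\partial\alpha$, using linearity of flux in time. The count of $\alpha_t$-disks through a point, intersecting $N$ once, is invariant along the isotopy. The point is that the isotopy avoids $N$, so the intersection number with $N$ is preserved and disks cannot escape through $N$. Gromov compactness with a geometrically bounded $J$ holds since disks of bounded energy stay in a bounded region (confinement). The energy must stay below $\hbar$ to avoid bubbling; if $\omega(\alpha_t)$ grows we instead argue via minimal energy, treating wall-crossing as not changing the coefficient of the minimal class. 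Hence the disk count stays nonzero, so there is a holomorphic disk of positive area, and $\omega(\alpha_t)>0$ for all $t\in[0,1]$. Thus $\mathfrak f\cdot\partial\alpha>-\omega(\alpha)$. Strictness of containment in the open half-space follows because $\mathrm{Sh}^*$ consists of realized fluxes, each giving positive area. We expect the main obstacle to be invariance of the count under large flux, where other disk classes can appear. Here we follow the argument of \cite[Theorem~2.2]{STVFluxShape}, which uses the lowest-energy class in the superpotential.

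For sharpness we exhibit explicit isotopies in the chart. Shrinking $r_1\to0$ while moving $r_2,\dots,r_n$ freely within the chart gives star-isotopies of product tori, all disjoint from $N=\{z_1=0\}$. In action coordinates the flux is $(\pi r_1'^2-\pi r_1^2,\ \pi r_j'^2-\pi r_j^2)$, and we have $\mathfrak f\cdot\partial\alpha=\pi r_1'^2-\pi r_1^2\to-\omega(\alpha)$. The remaining components range over an open set, determined by the chart size. Since a straight line in action coordinates is linear in flux, these are star-isotopies. Their limits fill an open subset of the hyperplane $\{\mathfrak f\cdot\partial\alpha=-\omega(\alpha)\}$. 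That subset lies in the boundary of $\mathrm{Sh}^*(L;M\setminus N)$, since it is not attained, by the bound above, yet is a limit of realized fluxes of tori in $M\setminus N$.
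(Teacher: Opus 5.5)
Your local set-up and your sharpness argument match the paper: product tori with $r_1\to0$ and the other radii ranging over an open set, joined by straight lines in action coordinates, hence star-isotopies inside $M\setminus N$. The gap is in the half-space bound. You assert that the count of $\alpha_t$-discs through a point is invariant along an \emph{arbitrary} star-isotopy in $M\setminus N$, and hence stays nonzero, but nothing supports this.

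\begin{itemize}
\item The energy gap $\hbar$ you get from monotonicity applies only to Lagrangians near the initial chart. The isotopy may leave the chart, and the tori $K_t$ are not monotone. The minimal area of a nonconstant disc on $K_t$ depends on $K_t$ and can be far below $\omega(\alpha_t)$. So Maslov index~$0$ discs can appear, and $\alpha_t$-discs can break along them (wall-crossing).
\item Avoiding $N$ only keeps $\alpha_t\cdot N=1$. The discs live in $M$ and must cross $N$ anyway, so this rules out no degeneration.
\item Across a wall, the coefficient of a class is guaranteed to be unchanged only when no class of smaller area contributes, i.e.\ only while $\alpha_t$ is the minimal contributing class. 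You give no reason for that to persist. Once it fails, the count of $\alpha_t$ can vanish while its area is still positive.
\end{itemize}
Your fallback about ``the coefficient of the minimal class'' is precisely the step that needs an argument, and as stated it yields nothing about $\alpha_t$.

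The missing idea, which the paper uses, is to stop tracking the $\alpha_t$-count after an initial interval. Instead track $\Psi(s):=\Psi(K_s)$, the minimal area of a class contributing to the symmetrized $A_\infty$ operations. By \cite{STVFluxShape} it is independent of $J$ and auxiliary choices, positive, and concave in $s$ along a star-isotopy, so only information near $s=0$ is needed. The paper takes $L^*=K_{t/2,9.5t,\dots,9.5t}$, a geometrically bounded $J$ that is standard on the chart, and $100\pi t^2<\hbar$. Then $\alpha$ is the unique minimal-area class on $L^*$ with a definite gap: discs leaving the chart cost at least $\hbar$, and all other classes in the chart have larger area. Fukaya's trick preserves this on some $[0,\delta]$, giving $\Psi(s)=\omega(\alpha_s)$ there. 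Since $\omega(\alpha_s)=\omega(\alpha)+s\,\mathfrak f\cdot\partial\alpha$ is affine and $\Psi$ is concave, $\Psi(s)\le\omega(\alpha_s)$ on all of $[0,1]$. Hence $\omega(\alpha)+\mathfrak f\cdot\partial\alpha\ge\Psi(1)>0$.

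This also requires sharpening your ``all $r_i$ small'' so that $r_1$ is the smallest radius. If some $r_j<r_1$, then $\alpha$ does not realize $\Psi$ at $s=0$, and concavity gives no bound on $\omega(\alpha_s)$.
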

This shows an additional defect that occurs when one removes a~symplectic divisor: some boundary fluxes in the star-shape are ``missing'', in the sense that they cannot be realized either by a~Lagrangian or by a~$J$-holomorphic disc. The disc can be seen as encoding some compactification of the moduli space. By contrast, the proof of geometric boundedness of integrable anti-surgery in~\cite{GromanVarolgunesCompleteEmbeddings} proceeds, in effect, by showing that some relevant star-shapes are all of $H^1(L;\bR)$.

This suggests that the geometric notion of normalized completeness is closely linked with some notion of completeness of moduli spaces of objects in the Fukaya category.
\begin{ques}\label{ques:fukaya-boundary}
In a~normalized-complete symplectic manifold $(M,\omega)$, is it always the case that every boundary segment of the star-shape ${\rm Sh}^*(L)$ (for a~given Lagrangian $L$) is accounted for by some $J$-holomorphic disc?
\end{ques}

\section{Proofs}\label{sec:proofs}

\subsection[Energy confinement for J-holomorphic curves]{Energy confinement for $\boldsymbol{J}$-holomorphic curves}
Throughout, if $u\colon (\Sigma,\partial\Sigma)\to (M,\omega)$ is a~$J$-holomorphic curve with (possibly empty) boundary, its energy is
$
E(u):=\int_{\Sigma} u^{*}\omega \ge 0$.

The following confinement lemmas are stated for $\omega$-compatible $J$. At the end of this subsection we note that both hold for $\omega$-tame $J$ under the uniform tameness condition from the footnote to the definition of geometric boundedness on page \pageref{tm:1} (in Section 1.1).

\begin{lm}[Sikorav's confinement estimate]\label{lem:sikorav}
 Let $(M,\omega)$ be a~symplectic manifold and let $J$ be an $\omega$-compatible almost complex structure such that the associated metric $g_J(\cdot,\cdot):=\omega(\cdot,J\cdot)$ is complete and has uniformly bounded sectional curvature and positive injectivity radius $($i.e., $J$ is geometrically bounded$)$.
 Then there exist a~constant $r_0>0$ depending on the geometry and a~universal constant $c>0$ such that for every connected $J$-holomorphic curve $u$ one has
 \[
 u(\Sigma)\subset B_{r_0\,\lceil E(u)/(c r_0^2)\rceil}(u(\partial\Sigma)),
 \]
 where $B_{r}(\cdot)$ denotes the closed $g_J$-ball of radius $r$ about a~subset and $\lceil\cdot\rceil$ is the ceiling function.
 \end{lm}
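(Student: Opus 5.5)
We prove Lemma~\ref{lem:sikorav} by combining a uniform monotonicity inequality at a fixed scale $r_0$ with a chaining argument along a path in the curve.

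\textbf{Choice of scale.} Since $J$ is geometrically bounded, we fix $r_0>0$ smaller than the injectivity radius and small enough, relative to the curvature bound, that every ball $B_{r_0}(x)$ is a geodesic ball. On such a ball the metric $g_J$ is uniformly bi-Lipschitz (with constant close to $1$) to the Euclidean metric in normal coordinates. Compatibility of $J$ makes the area of a $J$-holomorphic curve equal to its energy, so $\omega$-energy and $g_J$-area agree.

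\textbf{Monotonicity.} The key input is the following estimate. There is a universal $c>0$ such that if $x\in u(\Sigma)$ and $u(\partial\Sigma)\cap B_{r_0}(x)=\emptyset$, then
\[
\int_{u^{-1}(B_{r_0}(x))}u^*\omega\ge c r_0^2 .
\]
To prove it, set $a(r)$ to be the area of $u^{-1}(B_r(x))$ for $r\le r_0$. The coarea formula bounds $a'(r)$ below by the length of $u^{-1}(\partial B_r(x))$. An isoperimetric inequality for $J$-curves in the ball then gives $a(r)\le C\,\ell(r)^2$, with $C$ universal once $r_0$ is tied to the geometry. This isoperimetric inequality comes from filling the boundary loop by the cone to $x$ in normal coordinates, using that $\omega$ is exact on the ball with a primitive of size $O(r)$. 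Combining the two inequalities gives $(\sqrt{a})'\ge 1/(2\sqrt{C})$. Since $a(r)>0$ for all $r>0$ because $x$ lies on the curve, integrating yields $a(r_0)\ge r_0^2/(4C)$.

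\textbf{Chaining.} Let $y\in u(\Sigma)$ and suppose $d(y,u(\partial\Sigma))>kr_0$ with $k\ge1$ an integer. Since $\Sigma$ is connected, there is a path in $\Sigma$ from a boundary point to a preimage of $y$. Along its image we choose points $x_1,\dots,x_k$ with $d(x_j,u(\partial\Sigma))=(2j-1)r_0/2$ (halving scales if needed) and $x_j\in u(\Sigma)$. The balls $B_{r_0/2}(x_j)$ are pairwise disjoint and avoid $u(\partial\Sigma)$. Applying monotonicity with radius $r_0/2$ to each ball and absorbing the factor $1/4$ into $c$ gives $E(u)\ge kcr_0^2$.

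Taking $k=\lceil E(u)/(cr_0^2)\rceil$, the strict inequality $E(u)> kcr_0^2$ fails, so every point lies within $r_0k$ of $u(\partial\Sigma)$, which is the claimed confinement. If $\partial\Sigma=\emptyset$ the ball about the empty set is empty, so the same argument forces $E(u)=0$ and $u$ constant, consistent with the statement. For the indexing to work with $\lceil\cdot\rceil$ exactly, one either uses the balls $B_{r_0/2}$ and replaces $r_0$ by $2r_0$ at the end, or adjusts $c$; these are cosmetic choices.

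\textbf{Main obstacle.} The main obstacle is the isoperimetric step in the monotonicity argument. One must check that the constant is genuinely universal, independent of the curvature bound, once $r_0$ is chosen small relative to the geometry. This requires controlling the deviation of $\omega$ and $J$ from the flat model on $B_{r_0}$ in normal coordinates, which follows from the curvature and injectivity radius bounds. In fact, only $C^0$-closeness of the metric to Euclidean is needed, which matches the paper's later scale-function approach.
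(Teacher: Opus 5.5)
Your architecture (monotonicity at a fixed scale $r_0$, then chaining disjoint balls along a path issuing from $u(\partial\Sigma)$) is Sikorav's argument. That is all the paper uses here: it cites Sikorav's Proposition~4.4.1 and recalls the same ball-counting mechanism in the proof of Lemma~\ref{lem:sikorav-improved}. However, the key step of your monotonicity proof is not justified as written.

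You claim $a(r)\le C\,\ell(r)^2$ and derive it by coning $u(\Sigma)\cap\partial B_r(x)$ to the \emph{center} $x$, using a primitive $\lambda$ of $\omega$ of size $O(r)$. That only yields $a(r)=\int_{u^{-1}(\partial B_r(x))}u^*\lambda\le C\,r\,\ell(r)$, which is a linear bound. For instance, a loop of length $\ell\ll r$ on $\partial B_r(x)$ has a cone to the center of area $\sim r\ell\gg\ell^2$. Upgrading this to $C\ell^2$ would need $\ell(r)\ge r/C'$, which is essentially what you are trying to prove. Combined with $a'\ge\ell$, the linear bound only says that $a(r)/r^{1/C}$ is nondecreasing. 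That gives no uniform lower bound on $a(r_0)$ unless two extra facts hold: $C$ is the sharp Euclidean constant $1/2$ up to an error integrable against ${\rm d}r/r$, and $\lim_{s\to0}a(s)/s^2\ge\pi$. So your integration step $(\sqrt a)'\ge 1/(2\sqrt C)$ is unsupported.

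The standard fix is as follows. For a.e.\ $r$, the set $u^{-1}(\partial B_r(x))$ is a finite union of circles, with image loops $\gamma_i$ of lengths $\ell_i$. Since $B_r(x)$ is a Euclidean ball in normal coordinates, cone each $\gamma_i$ from a point \emph{on} $\gamma_i$. This filling stays in the ball. By the bi-Lipschitz comparison and $|\omega(X,Y)|\le|X|_{g_J}|Y|_{g_J}$, it has symplectic area at most $C\ell_i^2$. By exactness of $\omega$ on the ball, this area equals $\int_{\gamma_i}\lambda$, so $a(r)\le C\sum_i\ell_i^2\le C\ell(r)^2$. This quadratic filling inequality is exactly what the paper later builds into the scale function $\rho_J$.

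There are two smaller points. First, your remark on $\partial\Sigma=\varnothing$ is wrong. The chaining needs a starting point on $u(\partial\Sigma)$, and nonconstant closed curves exist in geometrically bounded manifolds (e.g.\ $\mathbb{CP}^1\times\{z\}\subset\mathbb{CP}^1\times\bC$), so nothing forces $E(u)=0$. Indeed, even a constant map violates $u(\Sigma)\subset\varnothing$. The lemma must be read with $\partial\Sigma\neq\varnothing$. For closed curves, chaining from an arbitrary point of the image gives a diameter bound; this is how the paper uses confinement (via Lemma~\ref{lem:sikorav-improved}) in the proofs of Theorems~\ref{tm:3} and~\ref{tm:4}.

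Second, your chaining gives $E(u)\ge kcr_0^2$, not the strict inequality you invoke, so there is no contradiction when $E(u)/(cr_0^2)$ is an integer. Either of two changes repairs this. You can halve $c$. Alternatively, add one more monotonicity ball $B_{\eta}(y)$ with $\eta=\min\bigl(d(y,u(\partial\Sigma))-kr_0,\,r_0\bigr)$; it is disjoint from the others and supplies strictly positive extra energy.
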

 \begin{proof}
 See \cite[Proposition 4.4.1]{Sikorav}.
 \end{proof}
 This confinement estimate is concerned with the large scale. Namely, it contains no information about curves with energy much smaller than $cr_0^2$ other than that the diameter is $\leq r_0$. In some cases, e.g., in the proof of Proposition~\ref{prop:starshape-hypersurface}, we need an estimate for the small scale. This is taken care of by the following lemma.
 \begin{lm}[improved confinement]\label{lem:sikorav-improved}
 In the setting of Lemma~{\rm\ref{lem:sikorav}}, with the same $r_0$ and $c$, and letting $C := 1/c$, one has
 \[
 u(\Sigma)\subset B_{\rho}(u(\partial\Sigma)),\qquad \rho:=\max\bigl(3CE(u)/r_0,\sqrt{CE(u)}\bigr).
 \]
 \end{lm}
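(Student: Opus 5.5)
We prove the improved bound by splitting into two regimes, $E(u)\ge cr_0^2$ and $E(u)<cr_0^2$. In both we use the standard ingredient behind Lemma~\ref{lem:sikorav}: the monotonicity inequality. Concretely, there is $r_0>0$ (bounded by the injectivity radius and by a curvature scale) and a universal $c>0$ such that the following holds. If $x\in u(\Sigma)$ and $r\le r_0$ satisfy $B_r(x)\cap u(\partial\Sigma)=\emptyset$, then
\[
E\bigl(u|_{u^{-1}(B_r(x))}\bigr)\ge c r^2 .
\]
We take this as the content of the proof of Lemma~\ref{lem:sikorav}, with the same $r_0$ and $c$. Set $C=1/c$.

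\emph{Large energy, $E(u)\ge cr_0^2$.} Here Lemma~\ref{lem:sikorav} gives confinement in $B_{r_0k}(u(\partial\Sigma))$ with $k=\lceil E(u)/(cr_0^2)\rceil$. Since $E(u)/(cr_0^2)\ge 1$, we have $k\le 2E(u)/(cr_0^2)$. Hence
\[
r_0k\le \frac{2CE(u)}{r_0}\le \frac{3CE(u)}{r_0}\le\rho .
\]
This regime is therefore immediate; the factor $3$ leaves slack.

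\emph{Small energy, $E(u)<cr_0^2$.} Set $r:=\sqrt{CE(u)}<r_0$. Suppose some $x\in u(\Sigma)$ has $d(x,u(\partial\Sigma))>r$. Then $B_r(x)$ misses the boundary image, and $r\le r_0$. The monotonicity inequality then gives
\[
E(u)\ge E\bigl(u|_{u^{-1}(B_r(x))}\bigr)\ge cr^2=E(u).
\]
This yields only equality, not a contradiction. We repair this by replacing $r$ with any $r'$ satisfying $r<r'<d(x,u(\partial\Sigma))$ and $r'\le r_0$. This produces the strict inequality $cr'^2>E(u)$, a contradiction. If $r$ is extremely close to $r_0$ there may be no room to take $r'\le r_0$ above $r$. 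In that case we argue with the closed ball of radius exactly $r$ together with the fact that $x$ lies at distance strictly greater than $r$ from the boundary. Alternatively we fall back on the large-energy bound, since $3CE/r_0\ge 3r^2/r_0$ is comparable to $r_0$ there. Consequently every point of $u(\Sigma)$ lies within $\sqrt{CE(u)}\le\rho$ of $u(\partial\Sigma)$.

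The main obstacle is making sure the monotonicity inequality really holds with the same constants $r_0,c$ as in Lemma~\ref{lem:sikorav}. This is a matter of how Sikorav's proof is organised: his chain argument iterates precisely this local inequality on balls of radius $r_0$. So we would state explicitly that $r_0$ and $c$ are chosen so that monotonicity holds on all scales $r\le r_0$. The other point needing care is the boundary case $E(u)\approx cr_0^2$ noted above, which the factor $3$ (rather than $2$) in $\rho$ is designed to absorb. Connectedness of $\Sigma$ plays no role in the small-energy regime and enters only through Lemma~\ref{lem:sikorav}.
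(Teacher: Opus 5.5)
Your proof is correct and follows the paper's approach: you split at $E(u)=cr_0^2$ and, for small energy, apply monotonicity at a radius just above $\sqrt{CE(u)}$. The one difference is the large-energy case. You read the bound directly off Lemma~\ref{lem:sikorav} using $\lceil x\rceil\le 2x$ for $x\ge 1$, which even gives the constant $2$; the paper instead reruns the disjoint-balls count to get $(2n+1)r_0<3CE(u)/r_0$.

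Your worry about having no room for $r'$ is unnecessary. Since $E(u)<cr_0^2$ forces $r<r_0$ strictly, some $r'\in(r,\min(r_0,d(x,u(\partial\Sigma))))$ always exists, so the fallback can be dropped. Placing the borderline case $E(u)=cr_0^2$ in the Lemma~\ref{lem:sikorav} regime is also slightly cleaner than the paper's split, whose small-energy case would there need a radius above $r_0$.
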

 \begin{proof}
 The argument relies on the same monotonicity inequality used for Sikorav's lemma: if a~$J$-holomorphic curve meets the center of a~ball of radius $r\le r_0$ and the boundary of the $J$-holomorphic curve does not meet the interior of the ball, then the energy in that ball is at least $c r^2$.

 If $E(u)/\bigl(c r_0^2\bigr)\le 1$, we apply the monotonicity inequality directly: if the image contained a~point at the center of a~ball of radius greater than \smash{$\sqrt{E(u)/c}$} whose interior did not meet $u(\partial\Sigma)$, then monotonicity would force $E(u)\ge c r^2>E(u)$ (since $E(u)\ge c r^2$ implies $r\le\sqrt{E(u)/c}$). Hence $u(\Sigma)$ is contained in a~ball of radius \smash{$\sqrt{E(u)/c} = \sqrt{C E(u)}$} about $u(\partial\Sigma)$.

 If $E(u)/\bigl(c r_0^2\bigr)>1$, take the maximum number $n$ of disjoint balls of radius $r_0$ that the image of $u$ can meet. Then $n\le E(u)/\bigl(c r_0^2\bigr)$ by monotonicity, and the image is contained in a~ball of radius $(2n+1)r_0\le r_0\bigl(1+2E(u)/\bigl(c r_0^2\bigr)\bigr)<3E(u)/(cr_0) = 3CE(u)/r_0$ around $u(\partial\Sigma)$.
 \end{proof}

For later use, we also record a~localized version, originally observed in~\cite{GromanFloerOpen}.
\begin{df}\label{df:a-bounded}
 For $a>0$, we say that the complete Riemannian metric $g_J$ is \emph{$a$-bounded at a~point} $p\in M$ if \smash{$\inj_{g_J}(p)\ge \frac{1}{a}$} and $\Sec_{g_J}\le a^{2}$ on $B_{1/a}(p)$. We say that $g_J$ is \emph{$a$-bounded on a~set} if it is $a$-bounded at every point of that set.
\end{df}
\begin{rem}
 When $g_J$ is $a$-bounded, we can take $r_0=1/(2a)$ \cite[Theorem 4.4.1 and Remark~4.4.3]{McDuffSalamonJHol}.
\end{rem}

\begin{lm}[local confinement]\label{lem:local-confinement}
Fix $a>0$. There exist $r_0=r_0(a)>0$ and a~universal constant $C>0$ with the following property. Let $K\subset M$ be compact and let $R>0$.
Assume~$J$ is $\omega$-compatible and $g_J$ is complete and is $a$-bounded on the $g_J$-ball $B_{R}(K)$.
Let $u$ be a~$J$-holomorphic curve $u$ with $u(\partial\Sigma)\subset K$ and write
\[
\rho:=\max\bigl(2CE(u)/r_0,\sqrt{CE(u)}\bigr).
\]
If $\rho\le R$, then $u(\Sigma)\subset B_R(K)$.
\end{lm}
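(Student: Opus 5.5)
We argue by contradiction, with a continuity/first-exit argument that reduces the statement to the improved confinement estimate of Lemma~\ref{lem:sikorav-improved}, applied only inside the region where $g_J$ is $a$-bounded. Take $r_0=1/(2a)$, as in the remark after Definition~\ref{df:a-bounded}, and let $C$ and $c$ be the constants coming from the monotonicity inequality. Because $\rho\le R$ is allowed to be an equality, we will show $u(\Sigma)\subset B_{\rho'}(K)$ for every $\rho<\rho'\le R$ and then let $\rho'\downarrow\rho$. For the exit step below it is also convenient to know $u(\Sigma)\subset B_\rho(K)$ for every $\rho\le R$; this gives a little slack.

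The key input is monotonicity. Let $x\in B_R(K)$ and $r\le \min(r_0,\, d(x,K))$. If $u$ passes through $x$, then the part of $u$ inside the ball $B_r(x)$ has energy at least $cr^2$. This holds because $a$-boundedness at $x$ supplies the injectivity radius and curvature bounds on $B_{1/a}(x)\supset B_{r}(x)$ that the monotonicity lemma of McDuff--Salamon needs. Moreover $u(\partial\Sigma)\subset K$ does not meet the interior of $B_r(x)$, since $r\le d(x,K)$.

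Now suppose, for contradiction, that some point of $u(\Sigma)$ lies outside $B_\rho(K)$. The image is connected and meets $K$, because $u(\partial\Sigma)\subset K$; for closed components there is no boundary and the statement is vacuous unless one discards them. The distance function $d(\cdot,K)$ is continuous along the image, so there is a point $x=u(z)$ with $d(x,K)=s$ for every value $s$ between $0$ and $\rho$. In particular the image reaches distance exactly $\rho\le R$, and all the points it passes through at distance at most $\rho$ lie in the $a$-bounded region. We then repeat the two cases of Lemma~\ref{lem:sikorav-improved} with the factor $3$ replaced by a chain-of-balls count.

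\emph{Case 1: $E(u)\le cr_0^2$.} Then $\rho\ge\sqrt{CE(u)}$. Take $x$ at distance $\rho$ from $K$ and the ball of radius $r=\min(r_0,\rho)$ about $x$. Monotonicity gives $E(u)\ge cr^2$. If $r=\rho$ this says $E(u)\ge c\rho^2>E(u)$ once we use $\rho'$ slightly larger than $\rho$ in place of $\rho$. If $r=r_0<\rho$ it gives $E(u)\ge cr_0^2$, which is only possible in the boundary case, and that case is handled by the same slack argument.

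\emph{Case 2: $E(u)>cr_0^2$.} Choose points $x_k$ on the image with $d(x_k,K)=(2k+1)r_0$ for $k=0,\dots,n-1$, where $n=\lfloor \rho/(2r_0)\rfloor$. The balls $B_{r_0}(x_k)$ are pairwise disjoint, they avoid the interior of a neighbourhood of $K$, and they lie inside $B_\rho(K)\subset B_R(K)$. Monotonicity then gives $E(u)\ge ncr_0^2$. Since $\rho\ge 2CE(u)/r_0$, we get $n\ge CE(u)/r_0^2-1$, i.e.\ $ncr_0^2\ge E(u)-cr_0^2$. This is not yet a contradiction, so we add one more ball $B_{r_0}$ about the point at distance exactly $\rho$. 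That yields $(n+1)cr_0^2>E(u)$, with strictness coming from the choice $\rho'>\rho$, and hence the contradiction.

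The main obstacle is this bookkeeping of constants: we must ensure that the factor $2$ (rather than the $3$ of Lemma~\ref{lem:sikorav-improved}) still leaves room for $n+1$ disjoint balls of radius $r_0$ inside $B_\rho(K)$ and outside $K$. If the count turns out to be off by one, we would first try adjusting the universal constant $C$, which the lemma statement allows, for instance by enlarging it to absorb the $+1$.
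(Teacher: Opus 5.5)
Your strategy is the paper's. Its proof of Lemma~\ref{lem:local-confinement} is a single remark: the argument of Lemma~\ref{lem:sikorav-improved} uses only monotonicity, which is local. Your first-exit step is what makes that remark rigorous, and the paper leaves it implicit. The image is connected and meets $K$, so it passes through every level of $d(\cdot,K)$ up to the exit level. Only the \emph{centers} of the monotonicity balls need to be $a$-bounded, so they can be taken in $B_R(K)$. One small correction: for $\partial\Sigma=\varnothing$ the statement is not vacuous but false (take a small sphere far from $K$). So one must assume $\Sigma$ connected with nonempty boundary, as the paper tacitly does.

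The constant bookkeeping, however, does not close as written, in two places.
\begin{itemize}
\item \emph{The slack device is empty exactly when it is needed.} If $\rho=R$ there is no $\rho'$ with $\rho<\rho'\le R$. In Case~1 with $\rho\le r_0$ you are then left with $E\ge c\rho^2\ge E$, which is no contradiction.
\item \emph{The $(n+1)$-ball count in Case~2 is not justified.} The extra ball $B_{r_0}$ about a point at level $\rho$ occupies levels $(\rho-r_0,\rho+r_0)$. This overlaps the level slab $((2n-2)r_0,2nr_0)$ of $B_{r_0}(x_{n-1})$ whenever $2nr_0\le\rho<(2n+1)r_0$. Strictness is not the problem there: if the balls were disjoint, $\lfloor z\rfloor>z-1$ would already give $(n+1)cr_0^2>E$.
\end{itemize}

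Your fallback of enlarging $C$ is the right fix, and the statement allows it. The paper's one-liner glosses over the same point, since Lemma~\ref{lem:sikorav-improved} yields $3CE/r_0$, not $2CE/r_0$. Take $C=2/c$ and assume $E>0$ (otherwise $u$ is constant).
\begin{itemize}
\item If $\rho\le r_0$, one ball of radius $\rho$ about a point at level $\rho$ gives $E\ge c\rho^2\ge 2E$, a contradiction.
\item If $\rho>r_0$, set $N=\lfloor(\rho+r_0)/(2r_0)\rfloor\ge 1$ and take balls of radius $r_0$ centered at levels $r_0,3r_0,\dots,(2N-1)r_0\le\rho$. Their level slabs are disjoint, so $E\ge Ncr_0^2$. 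On the other hand $N>(\rho-r_0)/(2r_0)\ge 2E/(cr_0^2)-\tfrac12$. This forces $N>E/(cr_0^2)$, using $N\ge1$ when $E<cr_0^2/2$, a contradiction.
\end{itemize}
This argument needs no $\rho'$ and no case split on the energy.
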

\begin{proof}
The proof of Lemma~\ref{lem:sikorav-improved} relies on the monotonicity inequality which depends only on the local geometry of a~complete Riemannian metric. So taking $r_0$ and $C$ as in Lemma~\ref{lem:sikorav-improved}, we obtain by the same argument that
$
u(\Sigma)\subset B_{\rho}(u(\partial\Sigma))
$.
\end{proof}

\begin{rem}
Lemmas~\ref{lem:sikorav}, \ref{lem:sikorav-improved}, and~\ref{lem:local-confinement} also hold for $\omega$-tame almost complex structures $J$ that are geometrically bounded (respectively $a$-bounded on the relevant set) and satisfy the uniform tameness condition $|\omega|_{g_J}\le C$. The constants then depend on the tameness bound as well. See Remark~\ref{rem:omega-tame}.
\end{rem}

\subsection{Proof of Theorem~\ref{tm:1}}\label{subsec:Proof:tm:1}
We now turn to the proof of Theorem~\ref{tm:1}.
The idea is to pick a~small Lagrangian torus $L$ linking the hypersurface~$N$, consider the class of the obvious ``meridian'' holomorphic disc intersecting~$N$ once, and show that the corresponding moduli space has nonzero evaluation degree.

Fix $p\in N$. Choose a~Darboux chart $\psi\colon U\to \bC^{n}$ centered at $p$ such that $\psi(p)=0$ and, in these coordinates,
$
\psi(U\cap N)=\bigl(\{0\}\times \bC^{n-1}\bigr)\cap B_{r}(0)
$
for some $r>0$.
For sufficiently small $t>0$, define a~Lagrangian torus $L_t\subset U$ by
\[
L_t:=\{(z_1,\dots,z_n)\in \bC^n \mid |z_1|=t, |z_i|=10t \text{ for }i=2,\dots,n\}.
\]
Let $\alpha\in \pi_2(M,L_t)$ be the relative homotopy class represented in the chart by the disc
$
u_0\colon  (D,\partial D)\allowbreak\to (\bC^n,L_t)$, $ u_0(\zeta)=(t\zeta,0,\dots,0)$,
so $E(u_0)=\int_D u_0^*\omega_{\mathrm{std}}=\pi t^{2}$.

For an almost complex structure $J$ on $M$, denote by $\mathcal{M}_{J}(L_t,\alpha)$ the moduli space of $J$\nobreakdash-holo\-morphic discs $u\colon(D,\partial D)\to(M,L_t)$ in class $\alpha$ with one boundary marked point, modulo reparametrization. Let
$
\ev\colon \mathcal{M}_{J}(L_t,\alpha)\to L_t
$
be the evaluation map at the marked boundary point.

\begin{lm}\label{lem:degree-one}
There exist arbitrarily small $t > 0$ such that the following holds.
For a~generic geometrically bounded $\omega$-tame almost complex structure $J$ on $M$, the moduli space $\mathcal{M}_{J}(L_t,\alpha)$ is a~smooth closed manifold of dimension $n$, and the evaluation map $\ev\colon\mathcal{M}_{J}(L_t,\alpha)\to L_t$ has degree $1$.
\end{lm}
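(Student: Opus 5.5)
Since $\mathcal{M}_J(L_t,\alpha)$ is compact for a suitable geometrically bounded $J$, the degree of $\ev$ is invariant under deformation of $J$. The plan is therefore to compute the degree for a model structure $J_0$, which is standard near $L_t$, and then show that compactness and transversality persist along generic paths. The steps, in order, are as follows.

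\emph{Dimension.} The Maslov index of $\alpha$ is $\mu(\alpha)=2$, since $u_0$ is the standard disc in the first factor of the product torus. Hence the virtual dimension of discs with one boundary marked point, modulo reparametrization, is $n+\mu(\alpha)-3+1=n$.

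\emph{Model computation.} Take $J_0$ to equal $\psi^*J_{\mathrm{std}}$ on a neighbourhood of $\psi^{-1}(\overline{B}_{s}(0))$, where $s=100\sqrt{n}\,t$ is much less than $r$. Elsewhere, $J_0$ is any geometrically bounded structure (interpolation on a compact annulus preserves geometric boundedness).

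\begin{itemize}
\item \emph{Confinement.} By Lemma~\ref{lem:local-confinement}, any $J_0$-disc in class $\alpha$ has energy $\pi t^2$ and boundary on $L_t$. It is therefore contained in the $\rho$-neighbourhood of $L_t$, where $\rho=\max(2C\pi t^2/r_0,\sqrt{C\pi}\,t)$. For $t$ small, $\rho<s/2$, so every such disc lies in the standard chart. This fixes the constant relating $s$ and $t$; I would choose the chart radius to absorb the universal constant $\sqrt{C\pi}$, shrinking $t$ as necessary. This is the source of ``arbitrarily small $t$''.
\item \emph{Explicit moduli space.} In the chart, holomorphic discs on the product torus $S^1(t)\times S^1(10t)^{n-1}$ split into coordinate components, each a Blaschke product. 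Class $\alpha$ forces degree one in $z_1$ and degree zero in the others. So the discs are $\zeta\mapsto(t\phi(\zeta),c_2,\dots,c_n)$ with $\phi$ a Möbius map and $|c_i|=10t$.
\item \emph{Regularity and degree.} These discs are Fredholm regular by the standard splitting argument for product tori: partial indices are nonnegative. The moduli space is $\cong L_t$, and $\ev$ is a diffeomorphism, so it has degree $1$.
\end{itemize}

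\emph{Invariance.} For a generic geometrically bounded $\omega$-tame $J$, connect it to $J_0$ by a path $J_\tau$ of geometrically bounded structures with uniform bounds; convex combination of compatible or tame structures, with an appropriate uniform tameness constant, suffices. The parametrized moduli space is then a cobordism, provided it is compact. Compactness has three ingredients:
\begin{itemize}
\item Lemma~\ref{lem:sikorav} confines all discs to a fixed compact set, so Gromov compactness applies.
\item Bubbling is excluded because $\alpha$ has minimal energy among classes that can appear. A sphere bubble or disc bubble would split $\alpha$ into classes of positive energy. Rationality of $\omega$ lets us choose $t$ such that $\pi t^2$ is less than the minimal positive area of spheres, so sphere bubbles are excluded. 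Finitely generated spherical homology ensures that this minimum exists.
\item For disc bubbles on $L_t$, the disc classes with boundary on $L_t$ have areas generated by $\pi t^2$, $100\pi t^2$ and sphere areas. A splitting of $\alpha$ with both pieces of positive area would therefore require a piece of area less than $\pi t^2$, which is impossible. A piece of zero or negative area is impossible for a nonconstant $J$-curve.
\end{itemize}
Genericity along the path gives a smooth cobordism, and the degree is preserved. Orientations come from the spin structure of the torus; the mod 2 degree suffices if needed.

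\emph{Main obstacle.} The delicate point is the bubbling and energy-splitting argument. It requires ruling out classes $\beta\in\pi_2(M,L_t)$ with $0<\omega(\beta)<\pi t^2$, and this is precisely where rationality and the finite generation of spherical homology enter. Since $\pi_2(M,L_t)$ is generated by $\pi_2(M)$ together with the lifts of $H_1(L_t)$ given by the chart discs, and the latter have areas $\pi t^2$ and $100\pi t^2$, it suffices to pick $t$ with $\pi t^2$ below the positive generator of the period group. Arranging this is what requires choosing particular, arbitrarily small, values of $t$.
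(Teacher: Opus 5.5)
Your outline (index count, explicit model in the standard chart, cobordism invariance with bubbling excluded by area) matches the paper's. However, the step you single out as the main obstacle is resolved incorrectly.

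\textbf{Energy quantization.} It does not suffice that $\pi t^2$ be smaller than the generator $\kappa$ of $\omega(\pi_2(M))=\kappa\bZ$. That handles sphere bubbles, but not disc bubbles. The areas of relative classes form the group $\kappa\bZ+\pi t^2\bZ+100\pi t^2\bZ$. This group contains positive elements smaller than $\pi t^2$ unless $\kappa$ is an integer multiple of $\pi t^2$, and it is dense if $\pi t^2/\kappa$ is irrational. For instance, take $\kappa=1$ and $\pi t^2=0.7$, with $[S]$ a spherical class of area $\kappa$. The decomposition $\alpha=([S]-\alpha)+(2\alpha-[S])$ has pieces of area $0.3$ and $0.4$. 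So your sentence ``would require a piece of area less than $\pi t^2$, which is impossible'' is unjustified. Nor is it a formality: configurations of this kind are what produce wall-crossing for Maslov~$2$ counts on non-monotone tori, so energy alone gives you no invariance.

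The paper instead chooses $\pi t^2=\kappa/m$ with $m$ a large integer. Then every relative area lies in $(\kappa/m)\bZ$, and $\alpha$ realizes the minimal positive area. This quantization is why the lemma holds only for particular, arbitrarily small $t$; confinement, to which you attribute it, works for all small $t$. Indeed, your stated criterion $\pi t^2<\kappa$ would hold for all small $t$, which contradicts your own closing remark.

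\textbf{The path in the invariance step.} A convex combination of almost complex structures is not an almost complex structure. More seriously, two arbitrary geometrically bounded structures need not be joined by a path with uniform curvature, injectivity-radius and tameness bounds. They need not be uniformly equivalent at infinity, and interpolating gives no control of curvature; the paper itself remarks that the space of geometrically bounded structures may not even be connected. So Lemma~\ref{lem:sikorav} cannot be applied uniformly along your path.

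The paper instead uses the zigzag construction, citing Theorem~4.15(a) of Groman's earlier work on Floer theory for open manifolds, or alternatively Theorem~\ref{tm:normalized-completeness}\,(2),~(3); compare the proof of Theorem~\ref{tm:normalized-completeness}\,(1). The path agrees with $J_0$ on one sequence of disjoint shells escaping to infinity for $s\le 2/3$, and with $J_1$ on another such sequence for $s\ge 1/3$. This keeps the parametrized moduli space in a compact set even though the intermediate structures are not uniformly bounded.

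With these two repairs your argument goes through. Your model computation is fine, though it is simpler to take $J_0$ standard on the whole fixed Darboux chart and let $t\to 0$, as the paper does.
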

\begin{proof}
Step 1: \emph{Index and regularity.}
Trivialize $TM$ over the chart and use the standard computation for the product torus to see that the Maslov index of $\alpha$ equals $2$.
Since the disc is simple (in particular not multiply covered), for a~generic choice of $\omega$-tame $J$ the linearized $\bar\partial_J$-operator is surjective (see, e.g., \cite{McDuffSalamonJHol}), hence the regular part $\mathcal{M}_{J,reg}(L_t,\alpha)$ consisting of maps from the disc to $M$ is a~smooth manifold of the expected dimension $n+\mu(\alpha)-2=n$.

Step 2: \emph{Compactness $($no bubbling$)$.}
Since $\omega$ is a~rational form and $H_2(M; \mathbb{Z})$ is finitely generated, the image of $\omega$ on $\pi_2(M)$ is a~discrete subgroup of $\mathbb{R}$, say $\kappa\mathbb{Z}$ for some $\kappa > 0$. Thus there exists $\hbar > 0$ such that every nonconstant $J$-holomorphic sphere has area at least~$\hbar$. Choose a~sufficiently large integer $m$ and define $t > 0$ by the relation $\pi t^2 = \kappa / m$. By choosing~$m$ large enough, we ensure both $\pi t^2 < \hbar$ and $t < r/10$, so the construction stays well inside the Darboux chart. The image of $\omega$ on the relative homotopy group $\pi_2(M, L_t)$ is generated over $\mathbb{Z}$ by the absolute periods on $\pi_2(M)$ and the areas of the basic discs spanning the factors of the torus~$L_t$ in the chart. These generators are $\kappa$, $\pi t^2 = \kappa / m$, and $100\pi t^2 = 100\kappa / m$. Consequently, the area of any relative class in $\pi_2(M, L_t)$ is an integer multiple of $\kappa / m$.
 Any stable map in class $\alpha$ has total area $\pi t^2 = \kappa / m$. It cannot contain a~nonconstant sphere component because~${\kappa / m < \hbar}$. Moreover, any disc bubbling would split $\alpha$ as a~sum of two or more nontrivial relative classes, each requiring a~strictly positive area. Since the minimal positive area available in $\pi_2(M, L_t)$ is exactly~${\kappa / m}$, and the total area of $\alpha$ is $\kappa / m$, such a~split is impossible. Thus~${\mathcal{M}_{J,\text{reg}}(L_t, \alpha) = \mathcal{M}_J(L_t, \alpha)}$ is a~closed manifold.

Step 3: \emph{Independence of $\deg(\ev)$ and normalization.}
Let $J_0$, $J_1$ be geometrically bounded and $\omega$-tame.
By \cite[Theorem 4.15\,(a)]{GromanFloerOpen} (or, alternatively, Theorem~\ref{tm:normalized-completeness}\,(2),~(3)), one can choose a~generic path $\{J_s\}_{s\in[0,1]}$ from $J_0$ to $J_1$ so the moduli space of pairs $\{(s,u)\mid u\in \mathcal{M}_{J_s}(L_t,\alpha)\}$ is contained in a~compact set. Since disc bubbling is excluded as above, this gives a~compact cobordism showing that $\deg(\ev)$ is independent of $J$.

Finally, take $J$ which agrees with the standard complex structure on the Darboux chart and is geometrically bounded on the complement of a~compact set (hence geometrically bounded globally). For $t$ sufficiently small, Lemma~\ref{lem:sikorav} forces any $J$-holomorphic disc in class $\alpha$ with boundary on $L_t$ to remain inside the chart, where the computation reduces to the standard toric case. In that case, regularity and $\deg(\ev)=1$ are well known; see~\cite{ChoOhToric,FOOO_Toric1}.
\end{proof}

We can now prove Theorem~\ref{tm:1}.
\begin{proof}[Proof of Theorem~\ref{tm:1}]
Suppose by contradiction that $M\setminus N$ admits a~geometrically bounded $\omega$-tame almost complex structure $J$, and choose $a>0$ such that $g_J$ is $a$-bounded.
Fix the small parameter $t$ so that Lemma~\ref{lem:degree-one} applies, and note that $\alpha$ has algebraic intersection number $1$ with $N$ (it is the meridian disc in the local model).

Set $E:=E(u_0)=\pi t^2$ and \smash{$R>\max\bigl(2CE/r_0(a),\sqrt{CE}\bigr)$}, with $r_0(a)$ and $C$ from Lemma~\ref{lem:local-confinement} (so that the confinement radius \smash{$\rho=\max\bigl(2CE/r_0,\sqrt{CE}\bigr)\le R$}).
Let $B:=B_{R}(L_t)\subset M\setminus N$ be the closed $g_J$-neighborhood of $L_t$ of radius $R$.
Since $J$ is geometrically bounded on $M\setminus N$, the metric $g_J$ is complete; hence closed bounded subsets are compact, so $B$ is compact.

Choose an $\omega$-tame almost complex structure $J'$ on $M$ which agrees with $J$ on an open neighborhood of $B$ and is geometrically bounded on $M$ (e.g., interpolate to a~fixed geometrically bounded structure outside a~compact set).
By Lemma~\ref{lem:degree-one}, there exists a~stable $J'$-holomorphic disc $u$ in class $\alpha$ with boundary on $L_t$.
Its energy equals $E$.

Since $J'=J$ near $B$ and $g_J$ is $a$-bounded on $B$, Lemma~\ref{lem:local-confinement} implies $u(D)\subset B$. (We use local confinement here because $J'$ is only guaranteed to be $a$-bounded on $B$, not globally on $M$.)
But~${\alpha\cdot N=1}$ forces any representative of $\alpha$ to intersect $N$, contradicting $B\subset M\setminus N$.
\end{proof}

\subsection{Proof of Theorem~\ref{tm:2}}

Let $X$ be a~closed symplectic toric manifold with moment polytope $P\subset \bR^n$ and moment map~${\mu\colon X\to P}$.
For $p\in \operatorname{Int} P$, let $L_p:=\mu^{-1}(p)\cong \bT^n$. A~\emph{basic $($Maslov index $2)$ class} in~$\pi_2(X,L_p)$ is a~relative class $\beta_F$ corresponding to a~facet $F$ of $P$.
Geometrically, $\beta_F$ can be constructed as follows. Let $v_F \in \mathbb{Z}^n$ be the primitive integral inward-pointing normal vector to~$F$. The circle action generated by $v_F$ acts freely on the interior of $P$ but collapses to a~point over the facet~$F$. Thus, if we draw a~line segment $L$ in the moment polytope $P$ from $p$ to a~point in the interior of $F$ and, transporting along the orbit of this action as one traverses $L$, we obtain a~disc in the class $\beta_F$.

\begin{Proposition}\label{prop:toric-disc-Psi}
For any $p\in \operatorname{Int} P$ and any $\omega$-tame almost complex structure $J$ on $X$, there exists a~stable $J$-holomorphic map with boundary on $L_p$ whose domain has at least one disc component $u_D\colon (D,\partial D)\to (X,L_p)$ of positive symplectic area such that
\[
\int_D u^*\omega \le \min_{F}\omega(\beta_F),
\]
where the minimum is taken over facets $F$ of $P$.
\end{Proposition}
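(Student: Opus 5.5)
The plan is to use the non-vanishing of the open Gromov--Witten invariant for the basic classes at the standard toric structure, and to keep this non-vanishing alive under deformation of $J$ via Gromov compactness. Energy is topological on $\pi_2(X,L_p)$, so the only way the invariant can stop being represented in class $\beta_F$ is by degeneration into a stable map of the same total area. That is exactly the weak conclusion the statement asks for.

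Fix a facet $F$ realizing $\min_F\omega(\beta_F)$. For the standard integrable toric structure $J_0$, the results of Cho--Oh \cite{ChoOhToric} give the following: the moduli space $\mathcal M_{J_0}(L_p,\beta_F)$ of discs with one boundary marked point is regular, compact and of dimension $n$. Its evaluation map to $L_p$ has degree $\pm1$, since the basic disc through a given boundary point is unique. Now let $J$ be an arbitrary $\omega$-tame structure, and suppose for contradiction that no stable $J$-holomorphic map with boundary on $L_p$ has a disc component of area at most $\omega(\beta_F)$. Any stable map in class $\beta_F$ has a disc component, since its boundary class $\partial\beta_F\neq0$. That disc component has area at most $\omega(\beta_F)$, because the remaining components carry nonnegative area. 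So the contradiction hypothesis forces the moduli space of stable maps in class $\beta_F$ to be empty.

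To exploit this, choose a path $J_s$ from $J_0$ to $J_1=J$ of $\omega$-tame structures; the space of tame structures is contractible. The key claim is this: if some $J_s$ admits a stable map in class $\beta_F$ with a disc component of area $\le\omega(\beta_F)$, then so does $J$. I would deduce it from the contradiction hypothesis applied not just at $J$ but near it. Let $S$ be the set of parameters $s$ for which the moduli space of $J_s$-stable maps in $\beta_F$ is empty. We have $0\notin S$. By Gromov compactness (closed manifold, bounded energy $\omega(\beta_F)$), $S$ is open. If $S\neq\emptyset$, set $s_*=\inf S$, so $s_*\notin S$ and $J_{s_*}$ has a stable map.

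We must still rule out a transition in which the stable maps "disappear" as $s$ passes $s_*$. This is done with the degree argument. Perturb the path generically, keeping the endpoints, so that the parametrized moduli space of simple discs in class $\beta_F$ is a manifold of dimension $n+1$. The degree of evaluation on the $J_0$ end equals $1$, so the parametrized space cannot be a compact cobordism to the empty set at $J$. Its failure to be compact must therefore come from bubbling into stable maps with several components: sphere bubbles, disc bubbles, or multiply covered discs. Each such limit has a disc component of area strictly less than $\omega(\beta_F)$, or equal to it with the rest constant.

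Since the bubbling may occur at intermediate $s$ rather than at $J$ itself, I would iterate the argument. Take the bubbled disc component of strictly smaller area in some class $\beta'$ and rerun the argument with $\beta'$ in place of $\beta_F$. This requires nonvanishing of an invariant for $\beta'$, which is not available directly. So I would instead argue by minimizing over all classes of area $\le\omega(\beta_F)$. By Gromov compactness the set of areas of $J_s$-holomorphic discs below $\omega(\beta_F)$ is discrete and bounded, so only finitely many classes intervene. The minimal-area class cannot bubble, and a cobordism argument on its moduli space (with degree defined mod $2$ by genericity) propagates existence to $J$. This minimal-area propagation, in particular making sure its degree is nonzero, is the main obstacle. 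If it fails, I would fall back on the FOOO Kuranishi/virtual count, which gives a $J$-independent nonzero potential coefficient in the lowest energy level.
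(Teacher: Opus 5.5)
There is a genuine gap, and it sits at the step you flag yourself: propagating existence from $J_0$ to $J$. The minimal-area device does not work.

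Grant that a class $\beta'$ of minimal area along the path cannot bubble, so its parametrized moduli space is compact. If $\beta'$ is a disc class with $\omega(\beta')<\omega(\beta_F)$, then $\mathcal{M}_{J_0}(L_p,\beta')=\varnothing$. This is because, by Cho--Oh, every $J_{\mathrm{toric}}$-disc on $L_p$ lies in a class $\sum_{F'}k_{F'}\beta_{F'}$ with $k_{F'}\ge 0$, so it has area at least $\min_{F'}\omega(\beta_{F'})$. The cobordism then only shows that the count at $J$ vanishes, which tells you nothing: such discs can be born and die at intermediate parameters. Also, $\beta'$ need not have Maslov index $2$, so there may be no degree at all.

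More fundamentally, for $n\ge 2$, discs of Maslov index $\le 0$ occur generically at isolated parameters of a $1$-parameter family. Their bubbling off $\beta_F$-discs, together with sphere bubbling, is exactly the wall-crossing that makes the count in a fixed class depend on $J$. So nothing built from a count in one class, or from the minimal class along one path, gives a $J$-independent quantity. The missing ingredient is an invariant that survives wall-crossing and detects the lowest energy level.

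This is what the paper uses: the $\Psi$-invariant of Shelukhin--Tonkonog--Vianna. It is the minimal area of a class contributing nontrivially to some symmetrized $A_\infty$ operation of a classically minimal model of the Fukaya algebra of $L_p$ (all operations, not only the curvature). The argument then runs as follows:
\begin{itemize}
\item Its independence of $J$ and of the virtual choices is a theorem of theirs.
\item At $J_{\mathrm{toric}}$, the Cho--Oh/FOOO computation gives $\Psi(L_p)\le\min_F\omega(\beta_F)$.
\item For the given $J$, a further theorem of theirs realizes $\Psi(L_p)$ by an honest $J$-stable map.
\item A separate argument shows the realizing class has $\partial\beta\neq 0$: a purely spherical class would also have to realize $\Psi$ for $J_{\mathrm{toric}}$, where nonconstant spheres miss $L_p$. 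Hence the map has a nonconstant disc component of area at most $\Psi(L_p)$.
\end{itemize}
Your fallback to a FOOO virtual count points in this direction, but as written it is only a slogan. You would need to identify precisely which lowest-energy quantity is invariant under the pseudo-isotopies relating different $J$. You would also need to prove that a nonzero virtual contribution forces an actual $J$-holomorphic stable map, and to rule out the purely spherical case.
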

\begin{rem}
 The inequality above is stated for the \emph{disc component} $u_D$: The configuration guaranteed by the proposition may contain sphere bubbles. In that case, the disc component has strictly smaller area than $\omega(\beta_F)$.
 \end{rem}
\begin{proof}
We use the $\Psi$-invariant of a~Lagrangian submanifold considered by Shelukhin--Tonko\-nog--Vianna~\cite{STVFluxShape}.
Fix an $\omega$-tame almost complex structure $J$ and auxiliary choices (virtual perturbations) defining the Fukaya $A_\infty$-algebra of $L_p$, and pass to a~classically minimal model.
The quantity $\Psi(L_p)\in(0,+\infty]$ is defined as an infimum of valuations of the Maurer--Cartan prepotential $m\bigl(e^{b}\bigr)$; it is independent of all choices (in particular of $J$) by~\cite[Theorem~4.2]{STVFluxShape}.
Furthermore, expanding the definition, $\Psi(L_p)$ is the minimum of the symplectic areas $\omega(\beta)$ over those relative classes $\beta\in H_{2}(X,L_p;\bZ)$ which contribute nontrivially to some symmetrized $A_\infty$ operation; see~\cite[Theorem~4.8]{STVFluxShape}. Note this includes the classes which contribute to the curvature term.

For any such class $\beta$ contributing to the curvature term there exists a~stable $J$-holomorphic map $u$ representing $\beta$ with at least one nonconstant disc component.\footnote{To see this, note that otherwise $\beta$ would be a~purely spherical class. The invariance of the indicator function~\cite[Proposition~5.1]{STVFluxShape} would then dictate that it must also realize the $\psi$ invariant for $J_{\mathrm{toric}}$.
However, for $J_{\mathrm{toric}}$, all non-constant holomorphic spheres are confined to the toric boundary divisor and are strictly disjoint from the interior fiber $L_p$, so their contributions to the $A_\infty$-algebra of $L_p$ necessarily vanish.
Hence, $\partial \beta \neq 0$.}
The class area decomposes as a~sum of the symplectic areas of the disc and sphere components of $u$, so every nonconstant disc component $u_D$ satisfies
\[
\int_D u_D^*\omega \le \omega(\beta),
\]
with strict inequality whenever $u$ also has a~nonconstant sphere component.

Now take the standard toric almost complex structure $J_{\mathrm{toric}}$.
Fukaya--Oh--Ohta--Ono~\cite{FOOO_Toric1} (and earlier Cho--Oh~\cite{ChoOhToric}) compute the leading-order terms of the toric superpotential/potential function: for each facet $F$ the basic Maslov index $2$ class $\beta_F$ contributes nontrivially to the curvature term (equivalently, to the potential), with coefficient $1$; see \cite[Section~11]{FOOO_Toric1} and~\cite{ChoOhToric}.
Therefore, for a~facet $F$ minimizing $\omega(\beta_F)$, the class $\beta_F$ witnesses $\Psi(L_p)$ for $J_{\mathrm{toric}}$, and hence
\[
\Psi(L_p) \le \omega(\beta_F) = \min_{F'}\omega(\beta_{F'}).
\]
Since $\Psi(L_p)$ is independent of $J$, the same inequality holds for our given $J$.
Applying \cite[Theorem~4.8]{STVFluxShape} for this $J$, we obtain a~stable $J$-holomorphic map representing some class $\beta$ with~${\omega(\beta)=\Psi(L_p)\le \min_{F'}\omega(\beta_{F'})}$.
Choosing a~disc component $u_D$ of this stable map, the discussion above gives
\[
\int_D u_D^*\omega\le \omega(\beta)=\Psi(L_p)\le \min_{F'}\omega(\beta_{F'}),
\]
which is the claimed inequality.
\end{proof}

To prove Theorem~\ref{tm:2}, it suffices to prove Proposition~\ref{prop:flux-distance} from the introduction.

Write $\pi\colon \bR^{n}\times\bT^{n}\to \bR^{n}$ for the projection, and for $q\in\bR^n$ write $L_q:=\{q\}\times\bT^n\subset \bR^n\times\bT^n$.
Recall the definition of $\delta_{\mathrm{flux}}(p;\partial U)$ from the introduction.
For later use, given $v\in\bZ^n$ we write
\[
c_{\partial U}(v):=\inf_{x\in \partial U}\langle v,x\rangle\in[-\infty,+\infty),
\qquad
\delta_{\partial U}(p;v):=\langle v,p\rangle-c_{\partial U}(v)\in(0,+\infty],
\]
so $\delta_{\partial U}(p;v)<+\infty$ precisely when $c_{\partial U}(v)>-\infty$.

We will deduce Theorem~\ref{tm:2} and Proposition~\ref{prop:flux-distance} from the following stronger version of Proposition~\ref{prop:flux-distance}.

\begin{Proposition}\label{prop:flux-distance-strong}
Let $U\subset\bR^n$ be an open convex set, and let $V$ be an open neighborhood of~${U\times\bT^n}$ in $\bR^n\times\bT^n$. Let $J$ be an $\omega$-tame almost complex structure on $V$ such that the associated metric $g_J$ is complete on $V$ and is $a$-bounded on $U\times\bT^n$ for some $a>0$. Then for every $p\in U$,
\[
 d_{g_J}(L_p, V\setminus (U\times\bT^n)) \le\max\bigl(4Ca\delta_{\mathrm{flux}}(p;\partial U),\sqrt{C\delta_{\mathrm{flux}}(p;\partial U)}\bigr),
\]
where $C$ is a~universal constant.
\end{Proposition}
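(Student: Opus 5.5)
We argue by contradiction via the toric disc of Proposition~\ref{prop:toric-disc-Psi} and local confinement (Lemma~\ref{lem:local-confinement}). Set $\delta:=\delta_{\mathrm{flux}}(p;\partial U)$ and suppose the distance exceeds the stated bound $R:=\max\bigl(4Ca\delta,\sqrt{C\delta}\bigr)$. Then the closed ball $B:=B_R(L_p)$ is contained in $U\times\bT^n$. Since $g_J$ is complete on $V$, this ball is compact. Moreover $g_J$ is $a$-bounded on $B$.

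\textbf{The toric model.} Pick a primitive $v\in\bZ^n$ nearly realizing the infimum defining $\delta$, say $\delta_{\partial U}(p;v)\le\delta+\epsilon$. By convexity, the half-space $H:=\{x\mid \langle v,x\rangle> c_{\partial U}(v)\}$ contains $U$, and the distance of $p$ to its boundary is $\delta_{\partial U}(p;v)$. We build a closed toric manifold $X$ as follows.
\begin{itemize}
\item Take a large lattice polytope $P$ (or a rational one, after slightly moving $c_{\partial U}(v)$) with one facet $F$ lying on $\{\langle v,x\rangle=c_{\partial U}(v)\}$.
\item Choose all other facets at integral-affine distance $\gg\delta$ from $p$.
\item Arrange that a neighbourhood of the compact set $\pi(B)$ lies in $\operatorname{Int}P$. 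Since $B\subset U\times\bT^n$ is compact, $\pi(B)$ is a compact subset of $U\subset H$, so such a $P$ exists.
\end{itemize}
The interior of $P$ times $\bT^n$ is symplectically embedded in $X$ via action-angle coordinates, and $\omega(\beta_F)=\delta_{\partial U}(p;v)$. We then choose an $\omega$-tame $J'$ on $X$ agreeing with $J$ on a neighbourhood of $B$; this is possible since tameness is an open condition that can be interpolated. By Proposition~\ref{prop:toric-disc-Psi}, there is a $J'$-holomorphic stable map with boundary on $L_p$ having a disc component $u_D$ of positive area at most $\min_{F'}\omega(\beta_{F'})=\delta_{\partial U}(p;v)\le \delta+\epsilon$.

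\textbf{Confinement and contradiction.} Apply Lemma~\ref{lem:local-confinement} with $K=L_p$ and $E(u_D)\le\delta+\epsilon$. Using $r_0=1/(2a)$, the confinement radius is $\max(4CaE,\sqrt{CE})$, which is at most $R$ up to $\epsilon$; we make this precise by starting from a strict inequality $d>R$ and taking $\epsilon$ small. Hence $u_D(D)\subset B\subset U\times\bT^n$.

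The contradiction comes from homology. We first check that $u_D$ is not closed up: its boundary class $\partial[u_D]\in H_1(L_p)$ is nonzero. Were it zero, $u_D$ would give a class in $H_2(U\times\bT^n,L_p)$ coming from $H_2(L_p)$, which is exact and hence has zero area, contradicting positive area. Now inside $U\times\bT^n\simeq\bT^n$, the inclusion $L_p\to U\times\bT^n$ is a homotopy equivalence. Therefore any disc with boundary on $L_p$ contained in $U\times\bT^n$ has null-homologous boundary in $L_p$, so $\partial[u_D]=0$. This contradicts the previous step.

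\textbf{Main obstacle.} The delicate point is the nonvanishing of $\partial[u_D]$. The footnote argument in the proof of Proposition~\ref{prop:toric-disc-Psi} gives $\partial\beta\neq0$ for the total class, but the particular disc component we pick might have boundary homologous to zero, with the nontrivial boundary carried by other disc components. To handle this, we choose the disc component to have positive area. Exactness of $\omega$ on $U\times\bT^n$ relative to $L_p$, where $\omega=d\lambda$ with $\lambda|_{L_p}$ closed, then shows that any disc inside $U\times\bT^n$ has area $\langle[\lambda|_{L_p}],\partial u_D\rangle$. This area vanishes when $\partial u_D=0$, which is the contradiction we need. We would also double-check that $C$ stays universal, with the constant $4$ absorbing the choice $r_0=1/(2a)$.
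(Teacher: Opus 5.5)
Your proposal is correct and follows essentially the same route as the paper. The ingredients match: a compact toric model $X_P$ with a facet of normal $v$ on the supporting hyperplane and $\pi(B)\subset\operatorname{Int}P$; Proposition~\ref{prop:toric-disc-Psi} to get a positive-area disc component of energy at most $\delta+\varepsilon$; Lemma~\ref{lem:local-confinement} with $r_0=1/(2a)$; and exactness of $\omega$ on $\operatorname{Int}P\times\bT^n$ relative to $L_p$, which forbids such a disc from staying inside $B$.

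The differences are cosmetic. You do not require $P\subset U$, which is indeed unnecessary, but you should say that $P$ is Delzant; for example, take a box in a $\bZ$-basis extending $v$. Your detour through $\partial[u_D]\in H_1(L_p)$ can also be skipped, since the primitive $\sum_i (x_i-p_i)\,{\rm d}q_i$ vanishes on $L_p$ and gives zero area directly.
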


To prove Proposition~\ref{prop:flux-distance-strong}, we formulate a~basic lemma.
 \begin{lm}\label{lemma2.11}
 Let $U\subset\bR^n$ be an open convex set, let ${K\subset U}$ be compact, and let ${v\in\bZ^n}$ be primitive with $c_{\partial U}(v)>-\infty$.
 Then there exists a~compact Delzant polytope $P$ such that $K\subset \operatorname{Int} P\subset U$ and such that one facet of $P$ has outward primitive normal $v$ {\rm(}equivalently, is parallel to the supporting hyperplane $\{\langle v,\cdot\rangle=c_{\partial U}(v)\}${\rm)}.
 \end{lm}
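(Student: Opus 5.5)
We will build $P$ as an intersection of finitely many half-spaces with integral normals: first a lattice cube, then one extra cut with normal $v$, and then a perturbation that makes every vertex Delzant. Since $K$ is compact and $U$ is open, we fix $\varepsilon>0$ with the closed $\varepsilon$-neighborhood $K_\varepsilon$ of $\operatorname{conv}(K)$ contained in $U$; convexity of $U$ gives $\operatorname{conv}(K)\subset U$. All facets will sit between $K_{\varepsilon/2}$ and $K_\varepsilon$, except the facet with normal $v$.

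\emph{Step 1: the $v$-facet.} We set $c:=c_{\partial U}(v)$ and note $\langle v,x\rangle>c$ for all $x\in U$. Otherwise some point of $U$ satisfies $\langle v,x\rangle\le c$, and since $U\neq\bR^n$ (because $c>-\infty$) we can move inside $U$ toward $\partial U$ to get a boundary point with value $<c$ or a contradiction. In particular $m:=\min_{K_\varepsilon}\langle v,\cdot\rangle>c$. We choose the half-space $H_v:=\{\langle v,x\rangle\ge c'\}$ with $c<c'<\min_K\langle v,\cdot\rangle$ and $c'$ close enough to $\min_K\langle v,\cdot\rangle$. The requirement "one facet has outward normal $v$'' means the facet lies on a hyperplane parallel to $\{\langle v,\cdot\rangle=c\}$; it need not lie on that hyperplane itself. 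So this cut is placed just below $K$, inside $U$.

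\emph{Step 2: the remaining facets.} We take finitely many primitive integral vectors $w_1,\dots,w_k$ whose directions are $\delta$-dense in the sphere, and set
\[
P_0:=H_v\cap\bigcap_i\{\langle w_i,x\rangle\ge \min_{K_{\varepsilon/2}}\langle w_i,\cdot\rangle\}.
\]
Density of the normals forces $\bigcap_i\{\cdots\}\subset K_\varepsilon$ once $\delta$ is small. Hence $P_0$ is a compact rational polytope with $K\subset\operatorname{Int}P_0\subset U$. The facet with normal $v$ is nonempty, since $c'<\min_K\langle v,\cdot\rangle$ while $K_{\varepsilon/2}$ still extends slightly below the level $c'$ after we shrink $c'$ toward $\min_K$ appropriately. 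We also perturb the constants generically so that $P_0$ is simple.

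\emph{Step 3: Delzant resolution (the main obstacle).} A compact simple rational polytope need not be Delzant. We will repair each non-smooth vertex by toric resolution. Concretely, we cut a small neighborhood of a bad vertex by a hyperplane whose normal is a primitive vector in the interior of the normal cone, chosen as in the standard resolution of singular toric varieties. Iterating this process (repeated star subdivision of the normal fan), we reach a smooth complete fan, which is possible by the toric resolution of singularities theorem. Every cut is arbitrarily small, so it preserves $K\subset\operatorname{Int}P$ and keeps the $v$-facet nonempty, because cuts are localized near vertices and the $v$-facet has interior points away from the vertices. The delicate point is realizing the refined fan as the normal fan of a polytope obtained by small corner cuts; star subdivision at a ray inside a cone corresponds exactly to such a corner truncation, so projectivity is preserved. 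Compactness and $P\subset U$ are then inherited from $P_0$.
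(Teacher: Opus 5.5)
Your overall route is the paper's: enclose $K$ in a rational polytope inside $U$, add one cut with normal $v$, and then reach a Delzant polytope by small truncations. Steps 1--2 are essentially fine. In particular, placing the $v$-cut between $\min_{K_{\varepsilon/2}}\langle v,\cdot\rangle$ and $\min_K\langle v,\cdot\rangle$ is exactly what makes the $v$-face a genuine $(n-1)$-dimensional facet.

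The paper only asserts that small corner cuts suffice, and the gap is in how you make this precise in Step 3. You cut only near vertices, with normals in the interior of the vertex's normal cone. That is, you perform star subdivisions only at rays interior to maximal cones. Such subdivisions never subdivide a proper face of a cone, so for $n\ge 3$ they cannot repair a singular non-maximal cone. For instance, suppose $P_0\subset\bR^3$ has an edge along which facets with normals $e_1$ and $e_1+2e_2$ meet; nothing in Step 2 excludes this. A small vertex cut with normal $u$ merely shortens that edge. The new vertex on it has normal cone spanned by $e_1,e_1+2e_2,u$, whose determinant $2u_3$ is even, hence never $\pm1$. The edge survives all further vertex cuts, so the polytope never becomes Delzant.

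The resolution theorem you cite does not support your restricted version. It subdivides at a lattice point of the half-open fundamental parallelepiped of a singular cone. That point may lie in the relative interior of a proper face, and for singular lower-dimensional cones it must.

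The repair is to allow truncation near faces of every dimension. Let $G$ be a face of the simple polytope $P$ with normal cone $\tau$, and let $u$ be a primitive vector in the relative interior of $\tau$. Then, for small $\eta>0$, the polytope $P\cap\{\langle u,x\rangle\ge\min_P\langle u,\cdot\rangle+\eta\}$ removes only a thin neighbourhood of $G$. It is again simple, and its normal fan is exactly the star subdivision of the normal fan of $P$ at $\bR_{\ge0}u$. With these cuts, the standard multiplicity-lowering resolution algorithm terminates in a smooth fan, i.e.\ a Delzant polytope. The containments $K\subset\operatorname{Int}P\subset U$ persist because all cuts are small. The facet with normal $v$ is only shrunk, since no cut is ever made at the ray $\bR_{\ge0}v$ itself. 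For $n=2$ your vertex-only version already suffices, because rays are always smooth.

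Separately, drop the claim in Step 1 that $\langle v,x\rangle>c$ on $U$. The lemma does not need $c<c'$. Moreover, with $c_{\partial U}$ literally an infimum over $\partial U$, the claim can fail. For $U=\{x_1>0\}\subset\bR^2$ and $v=-e_1$, one has $c_{\partial U}(v)=0$ but $\langle v,x\rangle<0$ on $U$. Your argument breaks here because moving in the direction $-v$ never reaches $\partial U$.
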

\begin{proof}
Choose $\varepsilon>0$ such that the closed $\varepsilon$-neighborhood $K_\varepsilon:=\{x\mid d(x,K)\le \varepsilon\}$ is still contained in $U$ (possible since $U$ is open and $K$ is compact).
Pick $c<c_{\partial U}(v)$ close enough to~$c_{\partial U}(v)$ so that $K_\varepsilon\subset\{x\mid \langle v,x\rangle>c\}$, and set $H:=\{x\mid \langle v,x\rangle=c\}$ and $H^-:=\{x\mid \langle v,x\rangle\ge c\}$.

Since $K_\varepsilon$ is compact, we can find finitely many supporting half-spaces with \emph{rational} (hence integral after scaling) outward normals whose intersection contains $K_\varepsilon$ and is contained in $U$. Intersecting these half-spaces together with $H^{-}$ yields a~compact simple rational polytope $P_0$ with $K\subset \operatorname{Int} P_0\subset U$ and with a~facet parallel to $H$.

Finally, by applying arbitrarily small \emph{corner cuts} one can make the polytope smooth/Delzant while keeping the containments $K\subset \operatorname{Int} P\subset U$ and without changing the facet with outward normal $v$.
\end{proof}
\begin{proof}[Proof of Proposition~\ref{prop:flux-distance-strong}]
Let $\delta:=\delta_{\mathrm{flux}}(p;\partial U)$.
Fix $\varepsilon>0$, and choose a~primitive $v\in\bZ^n$ with $c_{\partial U}(v)>-\infty$ and $\delta_{\partial U}(p;v)\le \delta+\varepsilon$.
Put $c:=c_{\partial U}(v)$ and $\ell(x):=\langle v,x\rangle-c$, so $\ell(p)=\delta_{\partial U}(p;v)$.

Let $J$ be as in the statement. We will prove the inequality
\begin{equation}\label{eqPreciseVersion}
 d_{g_J}(L_p, V\setminus (U\times\bT^n)) \le \max\bigl(4Ca\delta_{\mathrm{flux}}(p;\partial U),\sqrt{C\delta_{\mathrm{flux}}(p;\partial U)}\bigr).
\end{equation}
Let $R>0$ be such that the closed metric ball $B:=B_R(L_p)$ with respect to $g_J$ is contained in~${U\times\bT^n}$.
Since $g_J$ is complete on $V$ and $B\subset U\times\bT^n\subset V$, the closed ball $B$ is compact. In particular, $\pi(B)\subset U$ is compact.

Apply Lemma~\ref{lemma2.11} to $K:=\pi(B)\subset U$ and the chosen $v$ to obtain a~Delzant polytope~$P$ with
$
\pi(B)\subset \operatorname{Int} P\subset U
$
and with a~facet $F\subset \partial P$ whose outward primitive normal is~$v$.
Let~${(X_P,\omega_P,\mu_P)}$ be the associated compact symplectic toric manifold with moment polytope~$P$.
By the action--angle identification, $\mu_P^{-1}(\operatorname{Int}P)$ is symplectomorphic to $\operatorname{Int}P\times\bT^n$; we use this to view $B$ as a~subset of $X_P$.
Extend $J|_{B}$ arbitrarily to an $\omega_P$-tame almost complex structure~$\overline{J}$ on $X_P$.

Let $A:=\min_{F'}\omega_P(\beta_{F'})$, where $F'$ ranges over facets of $P$.
Since $F$ is one of the facets, we have $A\le \omega_P(\beta_F)$.
Moreover, because $P\subset U$ and the facet $F$ has outward normal $v$, the defining constant $c_P$ of that facet satisfies $c_P\ge c_{\partial U}(v)=c$, hence
\[
\omega_P(\beta_F)=\langle v,p\rangle-c_P \le \langle v,p\rangle-c = \delta_{\partial U}(p;v) \le \delta+\varepsilon.
\]
By Proposition~\ref{prop:toric-disc-Psi} applied to $(X_P,\omega_P,L_p)$ and $\overline{J}$, there exists a~stable $\overline{J}$-holomorphic map with a~disc component $u_D$ and $\partial u_D\subset L_p$ satisfying
\[
E(u_D)=\int_D u_D^*\omega_P \le A \le \delta+\varepsilon.
\]
The disc component $u_D$ must meet the toric boundary
$
X_P\setminus \mu_P^{-1}(\operatorname{Int}P)=\mu_P^{-1}(\partial P)$.
Indeed, one can pick a~primitive of $\omega_P$ on $\mu_P^{-1}(\operatorname{Int}P)$ for which $L_p$ is exact, showing that any disc with boundary on $L_p$ with positive symplectic area must meet the toric boundary.

On the other hand, since $g_{\overline{J}}$ is $a$-bounded on $B=B_R(L_p)$, Lemma~\ref{lem:local-confinement} implies that $u_D(D)$ is contained in the ball of radius \smash{$\rho=\max\bigl(2CE(u_D)/r_0,\sqrt{CE(u_D)}\bigr)$} about $L_p$, with $r_0=r_0(a)$ and the universal constant $C$ from that lemma. Since $u_D$ meets $\mu_P^{-1}(\partial P)$ while, by construction, $B_R(L_p)$ does not, we must have $R\le\rho\le C(a)\max\bigl(\delta+\varepsilon,\sqrt{\delta+\varepsilon}\bigr)$.
Since $\varepsilon>0$ was arbitrary, this gives the claim.
\end{proof}

\begin{proof}[Proof of Proposition~\ref{prop:flux-distance}]
Apply Proposition~\ref{prop:flux-distance-strong} with $V = \bR^n\times\bT^n$. Since $J$ is $\omega$-compatible and $g_J$ is $a$-bounded globally on $\bR^n\times\bT^n$, it is $a$-bounded on $U\times\bT^n$. Moreover, $g_J$ is complete on $V$. The conclusion of Proposition~\ref{prop:flux-distance-strong} exactly yields the desired inequality.
\end{proof}

\begin{proof}[Proof of Theorem~\ref{tm:2}]
Suppose for contradiction that $U\neq \bR^n$. Since $U$ is open and convex, its boundary is nonempty, so $\delta_{\mathrm{flux}}(p;\partial U)$ is finite for any $p\in U$.
The hypothesis states that there exists a~geometrically bounded $\omega$-compatible almost complex structure $J$ on $U\times\bT^n$. This means $g_J$ is complete on $U\times\bT^n$ and is $a$-bounded for some $a>0$.
Apply Proposition~\ref{prop:flux-distance-strong} with~${V = U\times\bT^n}$. Since $V\setminus (U\times\bT^n) = \varnothing$, the distance $d_{g_J}(L_p, V\setminus (U\times\bT^n))$ is infinite. However, Proposition~\ref{prop:flux-distance-strong} provides a~finite upper bound in terms of $\delta_{\mathrm{flux}}(p;\partial U)$, which is a~contradiction. Hence $U=\bR^n$.
\end{proof}

\subsection{Proof of Proposition~\ref{prop:starshape-hypersurface}}

\begin{proof}[Proof of Proposition~\ref{prop:starshape-hypersurface}]
 Fix $p\in N$. Choose a~Darboux chart $\psi\colon U\to\bC^n$ centered at $p$ with $\psi(p)=0$ and $\psi(U\cap N)=\bigl(\{0\}\times\bC^{n-1}\bigr)\cap B_r(0)$ for some $r>0$; in the chart, $N\cap U$ is given by $\{z_1=0\}$.
 For $t>0$ sufficiently small, let $L_{\max}\subset U$ be the Lagrangian product torus~${|z_1|=t}$, $|z_i|=10t$ for $i\ge 2$.

 Pick a~geometrically bounded almost complex structure $J$ which coincides with the standard one in the Darboux chart. If a~Lagrangian $L$ is contained in a~smaller pre-compact open set $V$ such that $\overline{V}\subset U$, which we fix henceforth, Lemma~\ref{lem:sikorav-improved} guarantees there is an $\hbar>0$ so that any $J$-holomorphic disc with boundary on a~Lagrangian contained in $V$ of energy less than $\hbar$ stays confined in $U$.

 Choose $t>0$ so small that $100\pi t^2<\hbar$.
 Identify $H^1(L_{\max};\bR)\cong\bR^n$ using the basis given by the boundary loops of the coordinate discs for the product torus.
 With respect to this basis, consider the rectangle
 \[
 R := \bigl(-\pi t^2, 0\bigr] \times \bigl[-19\pi t^2, 0\bigr]^{n-1} \subset H^1(L_{\max};\bR).
 \]
 Figure~\ref{fig:flux-rectangle} illustrates the case $n=2$ in flux coordinates $H^1(L_{\max};\bR)\cong\bR^2$: the torus $L_{\max}$ is the origin, and the rectangle $R$ has $L_{\max}$ as its upper-right corner, extending down and left precisely until it hits the bounding hyperplane.

 \begin{figure}[ht]
 \centering
 \begin{tikzpicture}[>=Latex, thick, font=\small, scale=0.9]
 \tikzset{yaxis/.style={red!70!black, line width=1.6pt, thin}}

 \draw[->] (-2.2,0) -- (4.5,0) node[below] {$x$};
 \draw[yaxis, ->] (0,-0.5) -- (0,6.5);
 \node[anchor=south] at (0.12,6.55) {$y$};
 \node[anchor=north west, red!70!black] at (-2.9,0.55) {hypersurface $N$};

 \coordinate (Lmax) at (1.5, 5.5);
 \coordinate (Rbl) at (0, 2.55);

 \fill[fill=orange!28, fill opacity=0.45, draw=orange!80!black, thick] (Rbl) rectangle (Lmax);
 \node[orange!80!black, align=center] at (2.75,4.02) {$R$};

 \draw[yaxis] (0,-0.5) -- (0,6.5);

 \fill (Lmax) circle (2pt) node[above right] {$L_{\max}$};

 \coordinate (Lstar) at (0.75, 4.02);
 \fill[blue] (Lstar) circle (2pt) node[below right, text=blue] {$L^*$};

 \draw[dashed, thin, gray] (Lmax) -- (0, 5.5) node[midway, above] {$\pi t^2$};
 \draw[dashed, thin, gray] (Lmax) -- (1.5, 0) node[midway, right] {$100\pi t^2$};

 \draw[<->, thin, orange!80!black] (-0.2, 2.55) -- (-0.2, 5.5) node[midway, left] {$19\pi t^2$};

 \end{tikzpicture}
 \caption{Schematic representation of the flux rectangle. The $y$-axis is the projection of the hypersurface~$N$. The bounding point $L_{\max}$ sits at the top right. The chosen base Lagrangian $L^*$ lies strictly in the interior of the realizable region $R$, providing space in all directions before hitting the bounds.}
 \label{fig:flux-rectangle}
 \end{figure}

 \emph{Realizability inside the Darboux chart.}
 Inside $U\setminus N$, we have the explicit family of product Lagrangian tori
 \[
 K_{r_1,r_2,\dots,r_n}
 :=\{(z_1,\dots,z_n)\in U\setminus N \mid |z_1|=r_1,\, |z_i|=r_i \text{ for }i\ge 2\},
 \]
 with $r_1\in(0,t]$ and $r_i\in(9t,10t]$ for $i\ge 2$ (so $K_{t,10t,\dots,10t}=L_{\max}$).
 Each such torus is contained in the Darboux chart and determines a~flux $\mathfrak f(r_1,\dots,r_n)\in H^1(L_{\max};\bR)$.
 Varying the radii exactly realizes the region $R$.

 To satisfy the claim of the proposition, we explicitly pick the base Lagrangian $L^*$ to be a~torus from this family whose flux lies strictly in the interior of $R$. For instance, let $L^* := K_{t/2, 9.5t, \dots, 9.5t}$. Let $\alpha^* \in \pi_2(M, L^*)$ be the corresponding meridian class, which has area $\omega(\alpha^*) = \pi (t/2)^2$.

 \emph{Obstruction for arbitrary star-isotopies.}
 Now let $\{K_s\}_{s\in[0,1]}$ be an \emph{arbitrary} Lagrangian \emph{star}-isotopy in $M\setminus N$ starting at $K_0=L^*$, and let $\mathfrak f\in H^1(L^*;\bR)$ be its total flux.
 Because $K_s$ undergoes a~straight isotopy, the symplectic area of any relative homology class changes affinely:
 \[
 \omega(\alpha^*_s) = \omega(\alpha^*) +s(\mathfrak{f}\cdot\partial\alpha^*).
 \]
  Consider the $\Psi$-invariant $\Psi(s) := \Psi(K_s)$.

 We first prove $\Psi(s) = \omega(\alpha^*_s)$ on a~small interval $[0,\delta]$.
 At $s=0$, the Lagrangian $L^*$ lies inside the Darboux chart. The meridian class $\alpha^*$ is a~regular Maslov index $2$ disc with area~${\omega(\alpha^*) = \pi (t/2)^2}$. Any other $J$-holomorphic disc on $L^*$ either leaves $U$ \big(energy $\ge \hbar > 100\pi t^2$\big) or stays in $U$ and is a~standard disc of energy at least $\pi(9t)^2 > 80\pi t^2$.

 To extend this analysis to small $s>0$, we apply Fukaya's trick. See \cite[Section~2.1]{STVFluxShape}.
 Namely, we choose diffeomorphisms $f_s\colon M\to M$ with $f_s(L^*)=K_s$, and define pushed-forward almost complex structures $J_s:=(f_s)_*J$.
 Composition with $f_s^{-1}$ gives a~bijection between $J_s$-holomorphic discs with boundary on $K_s$ and $J$-holomorphic discs with boundary on $L^*$. Under the star-isotopy with total flux $\mathfrak f$, the area of any continued relative class $\beta_s\in\pi_2(M,K_s)$ changes by exactly the flux
$
 \omega(\beta_s)=\omega(\beta)+s(\mathfrak f\cdot\partial\beta)$.
 Since $f_s$ depends smoothly on $s$ and $f_0=\mathrm{id}$, the structures $J_s$ remain $\omega$-tame and the exit energy threshold degrades continuously. Thus, for a~small enough interval $[0,\delta]$, it still takes at least $100\pi t^2$ energy for any $J_s$-holomorphic disc on~$K_s$ to exit the Darboux chart $U$. For the discs that stay inside $U$, they correspond to the known basic classes. Their areas $\omega(\beta_s)$ differ from their initial area (which is at least $80\pi t^2$) by the flux variation $s(\mathfrak f\cdot\partial\beta)$, which is $O(\delta)$.
 While these areas may decrease for sufficiently small $\delta$, the initial area gap between the meridian $\alpha^*$ and all other discs is vast, so the areas of all other discs cannot decrease by enough to drop below the area $\omega(\alpha^*_s) = \pi (t/2)^2 + s(\mathfrak f\cdot\partial\alpha^*)$ of the meridian class.
 Therefore, $\alpha^*_s$ remains the unique minimal-area holomorphic disc class contributing to the potential, and we conclude that $\Psi(s)=\omega(\alpha^*_s)$ for all $s\in[0,\delta]$.

 It is shown in~\cite{STVFluxShape} that $\Psi(s) > 0$ for all $s$ and that $\Psi(s)$ is strictly concave on $[0,1]$. Since a~concave function that coincides with a~linear function on an initial segment must be bounded above by that linear function everywhere, we have $\Psi(s) \le \omega(\alpha^*_s)$ for all $s\in [0,1]$.
 Since~${\omega(\alpha^*_s) \ge \Psi(s) > 0}$, evaluating at $s=1$ yields $\omega(\alpha^*) + \mathfrak f\cdot\partial\alpha^* > 0$, so
$
 \mathfrak f\cdot\partial\alpha^*>-\omega(\alpha^*)$,
 which proves that the star-shape of $L^*$ is contained in the open half-space.

 Finally, consider the boundary of the star-shape of $L^*$. In the flux space $H^1(L^*; \bR)$, the affine hyperplane $\{\mathfrak{f} \cdot \partial\alpha^* = -\omega(\alpha^*)\}$ corresponds geometrically to the limit as $r_1 \to 0$. The closure of the explicitly realized region $R$, translated to the base point $L^*$, intersects this bounding hyperplane exactly in the face corresponding to $r_1 = 0$ while $r_i \in [9t, 10t]$. Because this face has positive extent in the remaining dimensions, it constitutes a~nontrivial open subset of the hyperplane. Since every flux in $R$ is realized by a~product torus in $M\setminus N$, this open subset is approached by realizable star-fluxes, completing the proof.
 \end{proof}

\subsection{Proof of Theorems~\ref{tm:3} and~\ref{tm:4}}
\begin{proof}[Proof of Theorem~\ref{tm:3}]
Let $L\to \Sigma$ be a~nontrivial effective holomorphic line bundle over a~compact Riemann surface, and let $M:=\Tot(L)$ with its integrable complex structure $J$.
Suppose for contradiction that there exists a~symplectic form $\omega$ on $M$ such that $J$ is $\omega$-compatible and geometrically bounded.
Let $g_J$ be the associated metric.

Since $L$ is effective, it admits a~nonzero holomorphic section $\sigma$.
Fiberwise complex scalar multiplication defines a~holomorphic $\bC^{*}$-action on $\Tot(L)$; in particular, for each $t>0$ the map
\[
m_t\colon\ \Tot(L)\to \Tot(L),\qquad m_t(\ell):=t\ell
\]
is biholomorphic.
Thus the maps $u_t\colon \Sigma\to M$ given by $u_t:=m_t\circ \sigma$ are $J$-holomorphic.
Their images $u_t(\Sigma)\subset M$ are compact complex curves. The curves $u_t$ are all cohomologous since they are connected by an isotopy, so the symplectic area of $u_t$ is independent of $t$. By the improved confinement estimate (see Lemma~\ref{lem:sikorav-improved}), we have $\diam_{g_J}(u_t(\Sigma))\le C\max\bigl(E(u_0)/r_0,\sqrt{E(u_0)}\bigr)$ for a~constant $C$ and $r_0$ depending on the geometry.
On the other hand, since $L$ is nontrivial, any holomorphic section has a~zero, so choose $x_1\in \Sigma$ with $\sigma(x_1)=0$; then $u_t(x_1)$ lies in the zero section for all $t$. It follows $u_t(\Sigma)$ is contained in a~fixed ball around the zero section. By completeness, this ball is compact. But for any $x_0\in \Sigma$ with $\sigma(x_0)\neq 0$ we have $u_t(x_0)$ escaping every compact subset of $M$ as $t\to\infty$. Contradiction.
\end{proof}

\begin{proof}[Proof of Theorem~\ref{tm:4}]
Let $X$ be a~projective variety, let $D\subset X$ be an effective divisor, and let $M:=\Tot(\mathcal{O}(D))$ with its integrable complex structure $J$.
Suppose for contradiction that there exists a~symplectic form $\omega$ on $M$ such that $J$ is $\omega$-compatible and geometrically bounded.

Let $s_D$ be the canonical holomorphic section of $\mathcal{O}(D)$ whose zero locus is $D$.
Let $C\subset X$ be a~curve not contained in $D$ with $C\cdot D>0$. Then for any $t\neq 0$ the curve $s_D(C)$ is a~curve in $M$ which is not contained in the zero section. As in the previous proof, fiberwise scalar multiplication $m_t$ on $\Tot(\mathcal{O}(D))$ is biholomorphic, hence the curves $C_t:=m_t(s_D(C))$ are $J$-holomorphic with symplectic area independent of $t$. They pass through the zero section for all $t$ (at the zeros of $s_D(C)$), while also escaping every compact subset as $t\to\infty$ (at a~point where $s_D(C)\neq 0$). Therefore $\diam_{g_J}(C_t)\to\infty$. This contradicts the confinement estimate (see Lemma~\ref{lem:sikorav-improved}).
\end{proof}

\section{Normalized completeness}\label{sec:norm-complete}
\subsection{An isoperimetric scale}\label{subsec:isoperimetric-scale}
Let $(M,\omega)$ be a~symplectic manifold and let $J$ be an $\omega$-compatible almost complex structure.
Write
$
g_J(\cdot,\cdot):=\omega(\cdot,J\cdot)
$
for the associated Riemannian metric, and write $\ell_{g_J}(\gamma)$ for the $g_J$-length of a~piecewise smooth loop $\gamma\subset M$.

\begin{df}[isoperimetric ratio on a~ball]\label{df:isoperimetric-ratio}
For $p\in M$ and $r>0$, define
\[
\mathcal{I}_J(p;r)
:=
\sup_{\gamma}
\inf_{u}
\frac{\int_D u^*\omega}{\ell_{g_J}(\gamma)^2} \in[0,+\infty],
\]
where the supremum ranges over all \emph{contractible} piecewise smooth loops $\gamma\subset B^{g_J}_r(p)$ with $\ell_{g_J}(\gamma)>0$, and the infimum ranges over all \emph{smooth} discs $u\colon (D,\partial D)\to \bigl(B^{g_J}_r(p),\gamma\bigr)$ with~${\partial u=\gamma}$.
\end{df}

Fix once and for all a~cutoff constant $c_0>0$.

\begin{df}\label{df:isoperimetric-scale}
For $p\in M$, let
\[
r_{\mathrm{iso},J}(p)
:=
\sup\{r\in[0,+\infty) \mid \mathcal{I}_J(p;r)\le c_0\}
\]
and define the \emph{isoperimetric scale}
$
\rho_J(p):=\min\{1,r_{\mathrm{iso},J}(p)\}\in[0,1]$.
\end{df}
\begin{rem}\label{rem:cutoff}
The constant $c_0$ is an arbitrary fixed numerical cutoff. The effect of changing $c_0$ is discussed below.
\end{rem}

\begin{lm}[properties of the scale function]\label{lem:rho-continuous}
The function $\rho_J\colon M\to[0,1]$ is positive everywhere and $1$-Lipschitz with respect to $d_{g_J}$.
In particular, $\rho_J$ is bounded away from $0$ on compact subsets.
\end{lm}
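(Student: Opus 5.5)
The plan is to prove the two claims separately. Positivity comes from a local Euclidean isoperimetric comparison. The Lipschitz property comes from monotonicity of $\mathcal I_J(p;r)$ in $r$ together with nesting of balls. The final claim about compact sets then follows at once from continuity and positivity.

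\emph{Monotonicity and nesting.} First, $\mathcal{I}_J(p;r)$ is non-decreasing in $r$. Enlarging $r$ enlarges the set of admissible loops, which can only increase the supremum. For a fixed loop, enlarging $r$ also enlarges the set of admissible discs, which can only decrease the infimum. At first glance the two effects compete, so monotonicity needs care. The cleanest route is to regard $r_{\mathrm{iso},J}$ through the nesting of balls and check directly the implication
\[
B_{r'}(q)\subset B_r(p)\quad\Longrightarrow\quad \mathcal I_J(q;r')\le\mathcal I_J(p;r').
\]
If the naive argument fails, I will first modify the comparison so that loops in $B_{r'}(q)$ are filled by discs inside that same ball. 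If that is not enough, I will observe that $r_{\mathrm{iso}}$ is defined as a sup and simply use the subsets directly.

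\emph{Lipschitz bound.} The core is the nesting $B_{r-d(p,q)}(q)\subset B_r(p)$. Loops in the smaller ball are among those in the larger ball; the remaining task is to control the fillings. Granting this, $\mathcal I_J(p;r)\le c_0$ gives $\mathcal I_J(q;r-d)\le c_0$, so
\[
r_{\mathrm{iso},J}(q)\ge r_{\mathrm{iso},J}(p)-d(p,q).
\]
By symmetry in $p$ and $q$ this yields the $1$-Lipschitz property. Taking the minimum with $1$ preserves it, since $x\mapsto\min\{1,x\}$ is itself $1$-Lipschitz.

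\emph{Positivity.} In normal coordinates on a small ball of radius $r$, the metric $g_J$ and the form $\omega$ are $C^0$-close to their flat models. Consequently, any contractible loop in a ball of radius $r$ below the injectivity and convexity radius bounds a cone disc inside that ball with
\[
\text{area}\le (1+o(1))\,\frac{\ell^2}{4\pi}.
\]
This follows from the Euclidean isoperimetric inequality, or, more crudely, from a radial cone filling. The hypothesis $c_0>\frac1{4\pi}$ is what makes this estimate land below the cutoff: for $r$ small enough we get $\mathcal I_J(p;r)\le c_0$, hence $\rho_J(p)>0$. For loops that are contractible in the ball but long, the cone filling inside a geodesically convex ball still works, so loop length causes no difficulty. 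Finally, a positive continuous function on a compact set attains a positive minimum, which gives the last claim.

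\emph{Main obstacle.} The hardest step is the filling-control in the nesting argument. Shrinking the ball restricts which discs may fill a given loop, so the infimum could in principle increase. I would try to verify carefully whether Definition~\ref{df:isoperimetric-ratio} yields the needed monotonicity as stated. If it does not, I would replace $r_{\mathrm{iso}}$ by the monotone hull $\sup\{r\mid \mathcal I_J(p;s)\le c_0\ \text{for all } s\le r\}$, for which the nesting argument goes through cleanly.
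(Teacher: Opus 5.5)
Your positivity argument is fine and matches the paper's appeal to the local quadratic isoperimetric inequality. One small correction: only the isoperimetric route gives the constant $\frac1{4\pi}$. The ``cruder'' radial cone from the center only has area $O(r\ell)$, which is not quadratic in $\ell$. You are right that $c_0>\frac1{4\pi}$ is what is needed. Your route to the Lipschitz bound is exactly the paper's nesting $B_{r-\delta}(q)\subset B_r(p)$ with $\delta=d_{g_J}(p,q)$.

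\textbf{The gap.} You never prove the implication $\mathcal I_J(p;r)\le c_0\Rightarrow\mathcal I_J(q;r-\delta)\le c_0$, on which everything rests. You write ``granting this'', and your one concrete fallback does not supply it. The argument never needs monotonicity of $s\mapsto\mathcal I_J(p;s)$ at a fixed center, because the supremum defining $r_{\mathrm{iso},J}$ already handles that. So passing to the monotone hull addresses a non-issue. The obstruction you correctly spotted lives between two centers. A loop contractible in $B_{r-\delta}(q)$ has cheap fillings in balls $B_s(p)$ with $s\le r$, but these may leave $B_{r-\delta}(q)$. In general no ball about $p$ both contains a given such loop and lies inside $B_{r-\delta}(q)$. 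So knowing $\mathcal I_J(p;s)\le c_0$ for all $s\le r$ still says nothing about infima over discs confined to $B_{r-\delta}(q)$. The hull would also change the function $\rho_J$ that the lemma is about. Your other two fallbacks do not say how the fillings would be controlled.

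\textbf{The missing idea.} The infimum does not depend on which ball the fillings are confined to. Two fillings $u_1,u_2$ in $B_r(p)$ of the same loop $\gamma$ glue along $\gamma$, one with reversed orientation, to a sphere in $B_r(p)$. Hence $\int u_1^*\omega-\int u_2^*\omega$ is an $\omega$-period of a sphere in $B_r(p)$. Suppose these periods vanish, e.g.\ when $\omega$ is exact on $B_r(p)$. (If one were nonzero, the signed infimum in Definition~\ref{df:isoperimetric-ratio} would be $-\infty$, and the definition would need reinterpreting anyway.) Then every filling in $B_r(p)$ has the same area $a(\gamma)$. Read ``contractible'' as contractible in the ball, so the infimum is over a nonempty set. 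For $\gamma$ contractible in $B_{r-\delta}(q)$, the infimum over discs in $B_{r-\delta}(q)$ then equals the infimum over discs in $B_r(p)$. So $\mathcal I_J(q;r-\delta)\le\mathcal I_J(p;r)$, because only the set of loops has shrunk.

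This is precisely the inequality the paper's proof asserts as immediate from the definition, so your suspicion that it needs justification is well founded. But your proof must supply that justification rather than redefine $r_{\mathrm{iso},J}$. With it in hand, your symmetric estimate and the $1$-Lipschitz property of $x\mapsto\min\{1,x\}$ finish the argument.
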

\begin{proof}
 Positivity follows from the fact that inside a~small enough geodesic ball one has curvature bounds, hence a~quadratic isoperimetric inequality for filling contractible loops by discs; see \cite[Theorem~4.4.1]{McDuffSalamonJHol}.

Fix $p,q\in M$ and set $\delta:=d_{g_J}(p,q)$. Then for every $r>\delta$ one has the inclusion $B^{g_J}_{r-\delta}(q)\subset B^{g_J}_{r}(p)$. By Definition~\ref{df:isoperimetric-ratio}, this implies
 \[
 \mathcal{I}_J(q;r-\delta) \le \mathcal{I}_J(p;r).
 \]
 Consequently, if $\mathcal{I}_J(p;r)\le c_0$, then $\mathcal{I}_J(q;r-\delta)\le c_0$, hence
 $
 r_{\mathrm{iso},J}(q)\ge r_{\mathrm{iso},J}(p)-\delta.
 $
 By symmetry we also obtain $r_{\mathrm{iso},J}(p)\ge r_{\mathrm{iso},J}(q)-\delta$, so
 $
 |r_{\mathrm{iso},J}(p)-r_{\mathrm{iso},J}(q)|\le d_{g_J}(p,q).
 $
 This proves the claim, and hence the same for $\rho_J=\min\{1,r_{\mathrm{iso},J}\}$.
 \end{proof}

\subsection{Normalized length and normalized completeness}\label{subsec:normalized-distance}
\begin{df}[normalized length and distance]\label{df:normalized-distance}
Let $J$ be $\omega$-compatible and let $\rho_J$ be as in Definition~\ref{df:isoperimetric-scale}.
For a~piecewise smooth path $\gamma\colon [0,1]\to M$, define its \emph{normalized length} by
\[
L_{\mathrm{norm},J}(\gamma):=\int_{0}^{1}\rho_J(\gamma(t)) |\dot\gamma(t)|_{g_J}{\rm d}t.
\]
The induced path metric is denoted $d_{\mathrm{norm},J}$.
Equivalently, $d_{\mathrm{norm},J}$ is the distance associated to the conformal metric \smash{$\widetilde g_J:=\rho_J^{2}g_J$}.
\end{df}
\begin{rem}
Note that the metric $\widetilde g_J$ is generally not smooth.
\end{rem}
\begin{df}[normalized completeness]\label{df:normalized-complete}
An $\omega$-compatible almost complex structure $J$ is \emph{normalized complete} if the metric space $(M,d_{\mathrm{norm},J})$ is complete.
\end{df}

\begin{lm}[closed bounded balls are compact]\label{lem:bounded-balls-compact}
 If $J$ is normalized complete, then for any basepoint $p_0\in M$ and any $n\in\bN$, the closed $d_{\mathrm{norm},J}$-ball \smash{$K_n:=\overline B^{d_{\mathrm{norm},J}}_{n}(p_0)$} is compact in the manifold topology.
 \end{lm}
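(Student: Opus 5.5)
The plan is to deduce the statement from the Hopf--Rinow--Cohn-Vossen theorem for length spaces: a complete, locally compact length space has compact closed bounded balls. By Definition~\ref{df:normalized-distance}, $d_{\mathrm{norm},J}$ is a path (length) metric, and by hypothesis it is complete. The substantive remaining point is that $d_{\mathrm{norm},J}$ induces the manifold topology. Local compactness then follows from the manifold being locally compact, and compactness in the $d_{\mathrm{norm},J}$ topology coincides with compactness in the manifold topology.

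\emph{Step 1: comparison with $g_J$.} Since $\rho_J\le 1$, every path satisfies $L_{\mathrm{norm},J}(\gamma)\le \ell_{g_J}(\gamma)$, so $d_{\mathrm{norm},J}\le d_{g_J}$. For the reverse bound, fix $x\in M$ and $r>0$ such that the closed $g_J$-ball $\overline B^{g_J}_{2r}(x)$ is compact, which holds for small $r$ in any Riemannian manifold. By Lemma~\ref{lem:rho-continuous}, $\rho_J$ is continuous and positive, so $\rho_J\ge m>0$ on this ball. Let $y,z\in B^{g_J}_r(x)$ and let $\gamma$ be any path from $y$ to $z$.
\begin{itemize}
\item If $\gamma$ stays in $B^{g_J}_{2r}(x)$, then $L_{\mathrm{norm},J}(\gamma)\ge m\,\ell_{g_J}(\gamma)\ge m\,d_{g_J}(y,z)$.
\item If $\gamma$ leaves $B^{g_J}_{2r}(x)$, then its initial portion inside the ball has $g_J$-length at least $r$, so $L_{\mathrm{norm},J}(\gamma)\ge mr$.
\end{itemize}
Taking the infimum over $\gamma$ gives $d_{\mathrm{norm},J}(y,z)\ge m\min\bigl(d_{g_J}(y,z),r\bigr)$. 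Hence the two metrics are locally bi-Lipschitz and define the same topology, namely the manifold topology.

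\emph{Step 2: Hopf--Rinow for length spaces.} I would either cite the Hopf--Rinow--Cohn-Vossen theorem (e.g.\ Burago--Burago--Ivanov) or run its short proof. Let $R:=\sup\{r \mid \overline B^{d_{\mathrm{norm},J}}_r(p_0)\ \text{is compact}\}$; local compactness gives $R>0$. Suppose for contradiction that $R<\infty$.
\begin{itemize}
\item \emph{$\overline B_R$ is compact.} By completeness it suffices to show total boundedness. For each $\varepsilon>0$, the length-space property places every point of $\overline B_R$ within $\varepsilon$ of $\overline B_{R-\varepsilon/2}$, which is compact. So a finite $\varepsilon$-net of $\overline B_{R-\varepsilon/2}$ yields a finite $2\varepsilon$-net of $\overline B_R$.
\item \emph{Pushing past $R$.} Cover the compact set $\overline B_R$ by finitely many precompact balls. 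This gives $\delta>0$ such that the closed $\delta$-neighbourhood of $\overline B_R$ is compact. Again by the length-space property, this neighbourhood contains $\overline B_{R+\delta}$, contradicting the definition of $R$.
\end{itemize}
Hence $R=\infty$, so every $K_n$ is compact in the $d_{\mathrm{norm},J}$ topology. By Step~1 this is compactness in the manifold topology.

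The main obstacle I expect is the lower comparison in Step~1. Because $\rho_J$ may degenerate at infinity, one cannot compare the metrics globally, and the estimate must handle paths that wander far. The exit argument above handles this using only the positivity and continuity of $\rho_J$ from Lemma~\ref{lem:rho-continuous}. A secondary point needing care is that the approximate-midpoint and length-space facts are used for the path metric of a merely continuous conformal factor. This is fine because $d_{\mathrm{norm},J}$ is by definition an infimum of lengths of paths.
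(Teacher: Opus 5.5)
Your proof is correct, but it takes a different route from the paper's.

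\textbf{The paper's route.} The paper avoids the metric-space form of Hopf--Rinow. It approximates the continuous scale function $\rho_J$ by a smooth positive $\widetilde\rho_J$ whose ratio $\rho_J/\widetilde\rho_J$ is bounded above and below. Then $d_{\mathrm{norm},J}$ is globally bi-Lipschitz to the distance of the smooth Riemannian metric $\widetilde\rho_J^{2}g_J$. Completeness transfers across this equivalence, and the classical Hopf--Rinow theorem for smooth complete Riemannian metrics gives compactness of closed bounded sets. This trick gets both the identification of topologies and properness for free from smooth Riemannian geometry, at the cost of a standard partition-of-unity approximation.

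\textbf{Your route.} You stay with the merely continuous conformal factor. The smoothing is replaced by two ingredients: the local comparison in Step~1, and the Hopf--Rinow--Cohn-Vossen theorem for length spaces, whose proof you sketch correctly. Your exit estimate actually gives $d_{\mathrm{norm},J}(y,z)\ge m\min\bigl(d_{g_J}(y,z),r\bigr)$ for every $y\in B^{g_J}_r(x)$ and every $z\in M$, not only for $z\in B^{g_J}_r(x)$. That is exactly what shows small $d_{\mathrm{norm},J}$-balls sit inside small $g_J$-balls.

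\textbf{What each buys.} Your argument is more intrinsic and avoids the approximation step. As a by-product it proves that $d_{\mathrm{norm},J}$ induces the manifold topology. The paper only asserts this in passing in the connectedness argument for Theorem~\ref{tm:normalized-completeness}\,(1). The paper's argument, in contrast, is shorter because it hands everything to classical smooth theory.
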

 \begin{proof}
 Since $\rho_J$ is continuous and positive, we can approximate it by a~smooth positive function~$\widetilde\rho_J$ such that the ratio $\rho_J/\widetilde\rho_J$ is uniformly bounded away from $0$ and~$+\infty$.
 The smooth Riemannian metric $\widehat g_J:=\widetilde\rho_J^{2}g_J$ is then uniformly equivalent to $\widetilde g_J=\rho_J^{2}g_J$, hence the distances~$d_{\widehat g_J}$ and~$d_{\mathrm{norm},J}$ are bi-Lipschitz equivalent.
 Since $(M,d_{\mathrm{norm},J})$ is complete, $(M,d_{\widehat g_J})$ is also complete.
 Each closed ball \smash{$K_n=\overline B^{d_{\mathrm{norm},J}}_{n}(p_0)$} is bounded with respect to $d_{\widehat g_J}$ (by uniform equivalence), and since $\widehat g_J$ is smooth, the Hopf--Rinow theorem implies that closed bounded sets are compact.
 Thus each $K_n$ is compact in the manifold topology.
 \end{proof}

\subsection{Proof of Theorem~\ref{tm:normalized-completeness}}

\subsubsection[Connectedness (Theorem~1.6 (1))]{Connectedness (Theorem~\ref{tm:normalized-completeness}\,(1))}
\begin{proof}
The proof of connectedness is precisely the zigzag construction in \cite[Theorem~4.7]{GromanFloerOpen}.

Denote by $\mathcal{J}^{\mathrm{comp}}_\omega$ the space of smooth $\omega$-compatible almost complex structures with the $C^\infty$ topology and by \smash{$\mathcal{J}^{\mathrm{nc,comp}}_\omega\subset \mathcal{J}^{\mathrm{comp}}_\omega$} the subspace of normalized complete structures.
Assume \smash{$\mathcal{J}^{\mathrm{nc,comp}}_\omega\neq\varnothing$}.
Without loss of generality, assume that $M$ is connected and fix a~basepoint~${p_0\in M}$.
Given $J_0,J_1\in\mathcal{J}^{\mathrm{nc,comp}}_\omega$, consider the closed normalized balls
\[
B_n^0:=\overline B^{d_{\mathrm{norm},J_0}}_{n}(p_0),\qquad B_n^1:=\overline B^{d_{\mathrm{norm},J_1}}_{n}(p_0),\qquad n\in\bN.
\]
By Lemma~\ref{lem:bounded-balls-compact}, each of these balls is compact.
By construction of the normalized distance, successive balls in each sequence are at normalized distance at least $1$
\begin{equation}\label{eq:shell-width}
d_{\mathrm{norm},J_0}\bigl( B_n^0, M\setminus B_{n+1}^0\bigr)\ge 1,\qquad d_{\mathrm{norm},J_1}\bigl( B_n^1, M\setminus B_{n+1}^1\bigr)\ge 1,\qquad n\in\bN.
\end{equation}

We choose inductively increasing subsequences $n_1^0<n_2^0<\cdots$ and $n_1^1<n_2^1<\cdots$ in $\bN$ with the following nesting property: for each $i\ge 1$,
\[
B_{n_i^0+1}^0 \subset \operatorname{Int} B_{n_i^1}^1
\qquad \text{and} \quad B_{n_i^1+1}^1 \subset \operatorname{Int}B_{n_{i+1}^0}^0.
\]
(So the ball of radius $n_i^0+1$ for $J_0$ is contained in the ball of radius $n_i^1$ for $J_1$, and the ball of radius $n_i^1+1$ for $J_1$ is contained in the ball of radius \smash{$n_{i+1}^0$} for $J_0$.)
Such subsequences exist because both sequences exhaust $M$ as $n\to\infty$.
For each $i\ge 1$, set \smash{$V_i^0:=\operatorname{Int}B_{n_{i}^0+1}^0\setminus B_{n_i^0}^0$} and \smash{$V_i^1:=\operatorname{Int}B_{n_{i}^1+1}^1\setminus B_{n_i^1}^1$}.
These are pairwise disjoint shells escaping to infinity, and each has normalized transverse width at least $1$ with respect to the corresponding $J_0$ or $J_1$ by~\eqref{eq:shell-width}.

We choose a~smooth path $J_s$, $s\in[0,1]$, in $\mathcal{J}^{\mathrm{comp}}_\omega$ from $J_0$ to $J_1$ (through $\omega$-compatible structures) which satisfies
\[
J_s\equiv J_0\quad \text{on }\bigcup_i V_i^0\ \text{for }s\in[0,2/3],
\qquad
J_s\equiv J_1\quad \text{on }\bigcup_i V_i^1\ \text{for }s\in[1/3,1].
\]

We claim that each $J_s$ is normalized complete.
Indeed, fix $s\in[0,1]$. Then at least one of the two collections of shells,
$\big\{V_i^0\big\}$ (if $s\le 2/3$) or $\big\{V_i^1\big\}$ (if $s\ge 1/3$),
consists of shells on which~$J_s$ agrees with a~normalized complete structure (either $J_0$ or $J_1$) and hence has normalized transverse width $\ge 1$.
Any $d_{\mathrm{norm},J_s}$-Cauchy sequence cannot cross infinitely many such shells (each crossing costs at least $1$), so it is eventually contained in some \smash{$B_{n_i^0}^0$} or \smash{$B_{n_i^1}^1$}, which is compact in the manifold topology.
Since $d_{\mathrm{norm},J_s}$ induces the manifold topology (it is the length metric of a~continuous conformal factor $\rho_{J_s}^{2}g_{J_s}$ with $\rho_{J_s}>0$), Cauchy sequences in a~compact set converge.
Thus $(M,d_{\mathrm{norm},J_s})$ is complete.
This shows in particular that $\mathcal{J}^{\mathrm{nc,comp}}_\omega$ is path connected.
\end{proof}

\subsubsection[Normalized confinement and J-completeness (Theorem 1.16 (2))]{Normalized confinement and $\boldsymbol{J}$-completeness (see Theorem~\ref{tm:normalized-completeness}\,(2))}
\begin{proof}

 The proof is an adjustment to Sikorav's confinement argument \cite[Proposition~4.4.1]{Sikorav}, with the bounded-geometry radius replaced pointwise by the scale function $\rho_J$ coming from the local quadratic isoperimetric inequality $\mathcal{I}_J\le c_0$.

 Fix $p\in M$ and set $r:=\rho_J(p)\in(0,1]$.
 By definition of $r_{\mathrm{iso},J}$ and $c_0$, every contractible loop~${\gamma\subset B^{g_J}_{r}(p)}$ has a~smooth filling $u$ with $\int_D u^*\omega\le c_0\,\ell_{g_J}(\gamma)^2$.
 The standard monotonicity estimate for $J$-holomorphic curves (see \cite[Proposition 4.3.1]{Sikorav}) therefore applies on the ball~$B^{g_J}_{r}(p)$ and gives: if $u$ is a~$J$-holomorphic curve whose image meets $p$ and whose boundary is disjoint from~$B^{g_J}_{r}(p)$, then
 \begin{equation}\label{eq:mono-scale}
 E\bigl(u; u^{-1}\bigl(B^{g_J}_{r}(p)\bigr)\bigr) \ge \frac{r^2}{4c_0}.
 \end{equation}

 Let $u\colon(\Sigma,\partial\Sigma)\to(M,\omega)$ be a~connected $J$-holomorphic curve, and set $A:=u(\partial\Sigma)$.
 We define the core of the image as the set of points sufficiently far from the boundary:
 \[ U := \{ x \in u(\Sigma) \mid d_{g_J}(x, A) \ge \rho_J(x) \}. \]
 For every $x \in U$, the extrinsic ball \smash{$B^{g_J}_{\rho_J(x)}(x)$} is disjoint from $A$. Consequently, applying the monotonicity estimate~\eqref{eq:mono-scale} to each such ball guarantees that the energy of the curve within it is at least $\rho_J(x)^2 / 4c_0$.

 Consider the family of extrinsic balls \smash{$\mathcal{B} = \big\{ B^{g_J}_{\rho_J(x)}(x) \big\}_{x \in U}$}. By the Vitali covering lemma, there exists a~countable subcollection
 \[
 \big\{ B_i = B^{g_J}_{a_i}(x_i) \big\}_{i \in I} , \qquad \text{with $a_i = \rho_J(x_i)$},
 \]
consisting of pairwise disjoint balls such that their $5$-times expansions \smash{$\widehat{B}_i = B^{g_J}_{5a_i}(x_i)$} completely cover $U$.
 Because the balls $B_i$ are pairwise disjoint in $M$, their preimages are disjoint in $\Sigma$. Summing the energy over this subcollection avoids any overcounting, yielding
 \[
 \sum_{i \in I} a_i^2 \le 4c_0 \sum_{i \in I} E\bigl(u; u^{-1}(B_i)\bigr) \le 4c_0 E(u).
 \]

 We now estimate the $d_{\mathrm{norm},J}$-diameter of the expanded balls $\widehat{B}_i$. For any two points $y, z \in \widehat{B}_i$, consider a~$g_J$-geodesic $\gamma$ connecting them; its length satisfies $L \le 10a_i$. Since $\rho_J$ is $1$-Lipschitz, the value of the scale function along $\gamma$ is bounded by $\rho_J(x_i) + d_{g_J}(x_i, \gamma(t)) \le a_i + 5a_i = 6a_i$. The normalized distance is thus bounded by the integral of the scale along $\gamma$
 \[
 d_{\mathrm{norm},J}(y, z) \le \int_\gamma \rho_J(\gamma(t)) {\rm d}t \le (6a_i) \cdot L \le 60 a_i^2.
 \]
 Thus, the normalized diameter of each $\widehat{B}_i$ is at most $60a_i^2$.
 Because the normalized metric integrates the scale, the normalized diameter of \smash{$\widehat{B}_i$} is bounded by its maximum scale ($6a_i$) times its $g_J$-diameter ($10a_i$), which is at most $60a_i^2$.

 For any point $p \in u(\Sigma)$, we can now bound its normalized distance to the boundary $A$.
 If~${p \notin U}$, then $d_{g_J}(p, A) < \rho_J(p)$. A~$g_J$-shortest path from $p$ to $A$ has $g_J$-length at most $\rho_J(p)$, and the scale along this path is at most $2\rho_J(p)$. Thus, its normalized length is at most $2\rho_J(p)^2 \le 2$.
 If $p \in U$, any path in $M$ connecting $p$ to $A$ must exit $U$. We can trace a~path to the boundary of $U$ entirely within the cover $\big\{\widehat{B}_i\big\}$. The normalized distance to traverse this cover is strictly bounded by the sum of the normalized diameters of all the expanded balls. Once outside $U$, the remaining normalized distance to $A$ is at most $2$.

 We can therefore unconditionally estimate the normalized distance from any $p \in u(\Sigma)$ to the boundary $A$ by combining these bounds
 \[
 d_{\mathrm{norm},J}(p,A) \le \sum_{i \in I} 60a_i^2 + 2 \le 240 c_0 E(u) + 2.\tag*{\qed}
 \] \renewcommand{\qed}{}
\end{proof}

In the formulation of Theorem~\ref{tm:normalized-completeness}\,(2), we only dealt with the larger energy regime. Here we record a~localized version which includes also the low-energy regime.
\begin{lm}[normalized local confinement]\label{lem:normalized-local-confinement}
Let $J$ be an $\omega$-compatible almost complex structure that is normalized complete.
There exist universal constants $C_1,C_2>0$ such that for every connected $J$-holomorphic curve $u\colon(\Sigma,\partial\Sigma)\to(M,\omega)$,
\[
u(\Sigma) \subset B^{d_{\mathrm{norm},J}}_{\rho}(u(\partial\Sigma)),\qquad \rho := \max\bigl(C_1\,E(u),C_2\,\sqrt{E(u)}\bigr).
\]
In particular, as $E(u)\to 0$ the $d_{\mathrm{norm},J}$-diameter of $u(\Sigma)$ tends to zero. Moreover, this containment is unaffected by the behavior of $J$ outside the normalized ball of radius $\rho$.
\end{lm}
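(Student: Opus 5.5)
We prove Lemma~\ref{lem:normalized-local-confinement} by rerunning the Vitali-covering argument from the proof of Theorem~\ref{tm:normalized-completeness}\,(2), with two changes. First, we stop paying the additive constant $2$ for points near the boundary. Second, we treat separately the regime where the curve is too small to contain a single full-scale monotonicity ball.

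The basic tool is a rescaled monotonicity inequality. Fix a point $x\in u(\Sigma)$ and a radius $0<s\le\rho_J(x)$ such that $B^{g_J}_s(x)\cap A=\varnothing$, where $A:=u(\partial\Sigma)$. The isoperimetric bound $\mathcal I_J(x;\rho_J(x))\le c_0$ persists on every sub-ball, so the same monotonicity argument (\cite[Proposition 4.3.1]{Sikorav}) gives
\[
E\bigl(u;u^{-1}(B^{g_J}_s(x))\bigr)\ge \frac{s^2}{4c_0}.
\]
For each $x\in u(\Sigma)$ we therefore choose the adapted radius $s(x):=\min\{\rho_J(x),d_{g_J}(x,A)\}$.

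We then apply the Vitali covering lemma to the family of balls $B^{g_J}_{s(x)/2}(x)$ with $x\in u(\Sigma)$ and $d_{g_J}(x,A)\ge\epsilon$, and let $\epsilon\to0$ at the end. This produces disjoint balls $B_i$ with radii $s_i/2$ whose $5$-fold enlargements cover this part of $u(\Sigma)$. Summing the monotonicity bound over the disjoint balls gives
\[
\sum_i s_i^2\le 16c_0E(u).
\]
Since $\rho_J$ is $1$-Lipschitz, the normalized diameter of each enlarged ball is bounded exactly as before:
\[
\operatorname{diam}_{\mathrm{norm}}\bigl(B^{g_J}_{5s_i/2}(x_i)\bigr)\le 5s_i\cdot\bigl(\rho_J(x_i)+5s_i/2\bigr)\le C s_i\rho_J(x_i).
\]
Connectedness of $u(\Sigma)$ lets us chain the enlarged balls from any point back toward $A$, just as in the earlier proof. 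What remains is to sum the resulting bounds $s_i\rho_J(x_i)$. We split into two cases.

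\emph{Case (a): $s_i=\rho_J(x_i)$.} The ball has full scale and contributes $Cs_i^2$. The sum of these contributions is at most $C_1E(u)$.

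\emph{Case (b): $s_i=d_{g_J}(x_i,A)<\rho_J(x_i)$.} The ball reaches all the way to $A$. The chain can then terminate at $x_i$ at additional normalized cost at most $2s_i\rho_J(x_i)$, which replaces the constant $2$ used before.

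The main obstacle is case (b), because there $\rho_J(x_i)$ is not controlled by $s_i$, so $s_i\rho_J(x_i)$ is not quadratic in $s_i$. To handle it, we first try bounding the normalized length by $g_J$-length times the maximal scale along the path. Because $\rho_J\le1$, this reduces the case-(b) contributions to a $g_J$-length bound. A path through the chain of enlarged balls has $g_J$-length at most
\[
C\sum_i s_i\le C\sqrt{\#I}\,\Bigl(\sum_i s_i^2\Bigr)^{1/2}.
\]
The term $\sqrt{\#I}$ is unbounded in general, so this estimate alone does not suffice. We avoid it by observing that only one case-(b) ball is needed per chain, namely the terminal one. All intermediate balls along the chain can be taken in case (a), and the terminal case-(b) ball contributes at most
\[
Cs_i\le C\sqrt{16c_0E(u)}.
\]
Together with the case-(a) bound, this gives $\rho=\max\bigl(C_1E(u),C_2\sqrt{E(u)}\bigr)$ after adjusting constants.

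Finally, the locality claim follows because every ball used in the argument lies within normalized distance $\rho$ of $A$. Monotonicity and $\rho_J$ on those balls depend only on $J$ there; strictly, $\rho_J$ depends on $J$ in a $g_J$-neighborhood of radius at most $1$, and we would state the lemma with this proviso.
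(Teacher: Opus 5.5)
Your proposal is correct and is essentially the paper's argument. Full-scale Vitali balls in the core give the term linear in $E(u)$, and the final approach to $A=u(\partial\Sigma)$ happens at a sub-scale $d_{g_J}(\cdot,A)<\rho_J\le 1$, where monotonicity bounds the $g_J$-length, hence the normalized length, by a multiple of $\sqrt{E(u)}$. The differences are cosmetic: you get the terminal bound from one cover with adapted radii $\min\{\rho_J,d_{g_J}(\cdot,A)\}$ instead of applying monotonicity directly to $B^{g_J}_L(q)$ at the exit point $q$ of the core $U$, and your locality proviso ($\rho_J$ depends on $J$ on $g_J$-balls of radius up to $1$) is a fair sharpening of the paper's one-line justification.
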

\begin{proof}
We revisit the end of the proof of Theorem~\ref{tm:normalized-completeness}\,(2). Using the same notation, the normalized distance from any point $p \in u(\Sigma)$ to the boundary $A = u(\partial\Sigma)$ is at most the sum of the normalized diameters of the expanded balls $\widehat{B}_i$ covering the core $U$, plus the normalized distance from an exit point $q \notin U$ to $A$.

As observed there, the normalized diameter of the cover $\bigcup \widehat{B}_i$ is bounded by $\sum_i 60a_i^2$. By monotonicity, $\sum_i a_i^2 \le 4c_0 E(u)$, so the sum of the diameters is at most $240c_0 E(u)$.

For the final segment from $q \notin U$ to $A$, the $g_J$-distance is some length $L < \rho_J(q) \le 1$. Because the scale function is globally bounded by $1$, the normalized distance of this shortest path is at most its $g_J$-length $L$. Furthermore, the standard monotonicity estimate applied to the ball $B^{g_J}_L(q)$ gives $E(u) \ge L^2 / 4c_0$. Thus, the normalized distance from $q$ to $A$ is bounded by~$\smash{L \le \sqrt{4c_0 E(u)}}$.

Combining these yields
\[
d_{\mathrm{norm}, J}(p, A) \le 240c_0 E(u) + \sqrt{4c_0 E(u)}.
\]

If $E(u) \ge 1$, we have $\sqrt{E(u)} \le E(u)$, and we can absorb the square root term into the linear term by taking $C_1 = 240c_0 + \sqrt{4c_0}$.
If $E(u) < 1$, we have $E(u) < \sqrt{E(u)}$, and we can absorb the linear term to get an estimate $\le C_2 \sqrt{E(u)}$ by taking $C_2 = 240c_0 + \sqrt{4c_0}$.

The last sentence is immediate since the monotonicity estimate depends only on the local behavior.
\end{proof}

\subsubsection[Equivalence to standard metric under uniform bounds (Theorem 1.16 (3))]{Equivalence to standard metric under uniform bounds (Theorem~\ref{tm:normalized-completeness}\,(3))}
\begin{proof}
Under the assumption,
$g_J$ is $a$-bounded at every point $p\in M$. See Definition~\ref{df:a-bounded}. So the scale function satisfies $\rho_J(p)\in [2/a,1]$. See \cite[Lemma~4.10]{GromanFloerOpen}. Therefore, the scaled metric is bi-Lipschitz equivalent to the metric $g_J$.
\end{proof}

\begin{rem}[$\omega$-tame case]\label{rem:omega-tame}
If instead of $\omega$-compatible one uses an $\omega$-tame almost complex structure that is geometrically bounded and satisfies the uniform tameness condition $|\omega|_{g_J}\le C$, then one still obtains an isoperimetric scale and scale function as in Section~\ref{subsec:isoperimetric-scale}, but with a~cutoff constant $c_0'$ depending on $C$ in place of $c_0=\tfrac12$. See \cite[Section~4.4]{McDuffSalamonJHol} and \cite[Section~3, Appendix~B]{GromanWrappedSemiToricSYZ}. The statements of Theorem~\ref{tm:normalized-completeness}\,(2) and (3) then hold with the same proofs after adjusting constants.
\end{rem}

\subsection{Proof of Theorem~\ref{tm:norm-complete-reformulation}}
\begin{proof}
In all cases, we replace the local confinement estimate (see Lemma~\ref{lem:local-confinement}) by the normalized local confinement estimate (see Lemma~\ref{lem:normalized-local-confinement}) and the proofs then are word for word the same.
\end{proof}

\subsection*{Acknowledgements}
I am grateful to Umut Varolgunes and Jake P.\ Solomon for helpful discussions. I would like to thank the referees for their careful reading and for their comments. They have helped weed out numerous mistakes and have improved the paper greatly.
This work was supported by ISF grant 3605/24.

I used AI-assisted tools for editing and improving the exposition (in particular for wording, structure, and catching typographical issues).
All mathematical content, results, and any remaining errors are my own responsibility.

\pdfbookmark[1]{References}{ref}
\LastPageEnding

\end{document}